\theoremstyle{plain}
\newtheorem{prop}{Proposition}[section]
\newtheorem{thm}[prop]{Theorem}
\newtheorem{cor}[prop]{Corollary}
\newtheorem{lem}[prop]{Lemma}
\theoremstyle{definition}
\newtheorem{dfn}[prop]{Definition}
\newtheorem{rem}[prop]{Remark}
\newtheorem{example}[prop]{Example}
\newtheorem{lab}[prop]{}
\newcommand{\isoto}{\overset{\sim}{\to}}
\renewcommand{\iff}{\Leftrightarrow}
\newcommand{\A}{{\mathbb{A}}}
\newcommand{\C}{{\mathbb{C}}}
\newcommand{\bbH}{{\mathbb{H}}}
\newcommand{\K}{{\mathbb{K}}}
\newcommand{\Q}{{\mathbb{Q}}}
\newcommand{\R}{{\mathbb{R}}}
\renewcommand{\a}{{\mathfrak{a}}}
\newcommand{\g}{{\mathfrak{g}}}
\newcommand{\h}{{\mathfrak{h}}}
\renewcommand{\k}{{\mathfrak{k}}}
\newcommand{\p}{{\mathfrak{p}}}
\renewcommand{\t}{{\mathfrak{t}}}
\newcommand{\E}{\mathsf{\Lambda}}
\newcommand{\scrO}{{\mathscr{O}}}
\DeclareMathOperator{\Ad}{Ad}
\DeclareMathOperator{\Aut}{Aut}
\DeclareMathOperator{\conv}{conv}
\DeclareMathOperator{\End}{End}
\DeclareMathOperator{\Hom}{Hom}
\DeclareMathOperator{\tr}{tr}
\DeclareMathOperator{\trd}{trd}
\DeclareTextFontCommand{\textnf}{\normalfont}
\newcommand{\cone}{\mathrm{cone}}
\newcommand{\diag}{\mathrm{diag}}
\newcommand{\id}{\mathrm{id}}
\newcommand{\relint}{\mathrm{relint}}
\newcommand{\x}{{\mathtt{x}}}
\newcommand{\y}{{\mathtt{y}}}
\newcommand{\du}{{\scriptscriptstyle\vee}}
\renewcommand{\emptyset}{\varnothing}
\renewcommand{\setminus}{\smallsetminus}
\newcommand{\hvar}{{-}}
\newcommand{\ol}{\overline}
\newcommand{\plus}{{\scriptscriptstyle+}}
\newcommand{\minus}{{\scriptscriptstyle-}}
\newcommand{\wh}[1]{\widehat{#1}}
\newcommand{\wt}[1]{\widetilde{#1}}
\newcommand{\all}{\forall\,}
\renewcommand{\subset}{\subseteq}
\newcommand{\nml}{\trianglelefteq}
\renewcommand{\choose}[2]{\genfrac(){0pt}{}{#1}{#2}}
\newcommand{\bil}[2]{\langle{#1},{#2}\rangle}
\begin{document}

\title
[Spectrahedral representation of polar orbitopes]
{Spectrahedral representation of polar orbitopes}

\begin{abstract}
Let $K$ be a compact Lie group and $V$ a finite-dimensional
representation of $K$. The orbitope of a vector $x\in V$ is the
convex hull $\scrO_x$ of the orbit $Kx$ in $V$. We show that if $V$
is polar then $\scrO_x$ is a spectrahedron, and we produce an
explicit linear matrix inequality representation. We
also consider the coorbitope $\scrO_x^o$, which is the convex set
polar to $\scrO_x$. We prove that $\scrO_x^o$ is the convex hull of
finitely many $K$-orbits, and we identify the cases in which
$\scrO_x^o$ is itself an orbitope. In these cases one has
$\scrO_x^o=c\cdot\scrO_x$ with $c>0$. Moreover we show that if $x$
has ``rational coefficients'' then $\scrO_x^o$ is again a
spectrahedron. This provides many new families of doubly
spectrahedral orbitopes. All polar orbitopes that are derived from
classical semisimple Lie can be described in terms of conditions on
singular values and Ky Fan matrix norms.
\end{abstract}

\author{Tim Kobert, Claus Scheiderer}
\address
  {Fachbereich Mathematik und Statistik \\
  Universit\"at Konstanz \\
  78457 Konstanz \\
  Germany}
\email
  {tim.kobert@uni-konstanz.de, claus.scheiderer@uni-konstanz.de}



\thanks
{This work was partially supported by DFG grants SCHE281/10-1 and
SCHE281/10-2}

\date\today
\maketitle


\section*{Introduction}

Let $K$ be a compact Lie group, and let $V$ be a finite-dimensional
real representation of $K$. The orbitope of a vector $x\in V$,
denoted $\scrO_x$, is the convex hull of the orbit $Kx$ in $V$.
Orbitopes are highly symmetric objects that are interesting from many
perspectives, like convex geometry, algebraic geometry, Lie theory,
symplectic geometry, combinatorial geometry or optimization.
We refer to \cite{sss} for a broad overview with plenty of explicit
examples.

Here our focus will be on properties of orbitopes that are
particularly relevant to optimization, and more specifically, to
semidefinite programming. We are interested in existence and explicit
construction of spectrahedral representations for orbitopes and
related convex bodies. For this we consider a particular class of
group representations, namely polar representations of connected
compact Lie groups. As far as the orbit structure is concerned, all
such representations arise from Riemannian symmetric spaces $M=G/K$
as actions of the isotropy group on the tangent space at a point.
In other words, each polar representation comes from a Cartan
decomposition $\g=\k\oplus\p$ of a real semisimple Lie algebra $\g$,
as the adjoint representation of $K$ on $\p$. Dadok \cite{da} showed
that these representations have very particular properties. At the
same time they comprise all the familiar actions of the classical
unitary, orthogonal or symplectic groups on (skew-) hermitian or
symmetric matrices.

Our main results are as follows. We prove that every orbitope
$\scrO_x$ in a polar representation is a spectrahedron, i.e.\ an
affine-linear slice of the psd matrix cone. In fact we produce an
explicit linear matrix inequality representation for any such
orbitope (Theorem \ref{polorbspekt}). So far, this result was known
only for a few scattered classes of examples. We also consider the
dual convex body $\scrO_x^o$, called the coorbitope of~$x$. We prove
that $\scrO_x^o$ always is the convex hull of finitely many
$K$-orbits, and we identify those orbits explicitly (Corollary
\ref{orbitsoxo}). In particular, we isolate the cases when $\scrO_x$
is a biorbitope (Theorem \ref{biorbitop}), meaning that $\scrO_x^o$
is an orbitope as well. Remarkably, $\scrO_x$ is always self-polar up
to positive scaling when it is a biorbitope (Theorem \ref{oxovsox}).
Moreover, we show that whenever the orbitope $\scrO_x$ can be
``defined over the rational numbers~$\Q$'', the coorbitope
$\scrO_x^o$ is again a spectrahedron, and we find an explicit linear
matrix inequality for it (Theorem \ref{doublspect}). So far, only
very few examples of doubly spectrahedral sets were known
\cite{spw}. Our result provides many new series of sets with this
property.

The main and all-important tool for our results is Kostant's
convexity theorem \cite{ko}. It allows to reduce most questions
considered here to a Cartan subspace, and even to a Weyl chamber. In
this way the questions become polyhedral in nature.

The paper is organized as follows. Polar representations and
orbitopes are recalled in Section~1. The general background on
semisimple real Lie algebras and their restricted root systems is
summarized in Section~2, as far as it will be needed here. Kostant's
theorem
is stated in Section~3, together with a few immediate consequences.
Spectrahedral representations for polar orbitopes are constructed in
Section~4. In Sections 5 and~6 we relate the facial structure of
$\scrO_x$ to the momentum polytope $P_x$, with an emphasis on maximal
faces of $\scrO_x$ resp.\ facets of $P_x$. Since the maximal faces of
$\scrO_x$ correspond to the extreme points of the polar set
$\scrO_x^o$,
this allows us to identify biorbitopes. Doubly spectrahedral
orbitopes are considered in Section~7. Finally, in Section~8 we list
all polar orbitopes that are derived from semisimple Lie algebras of
classical type. All these orbitopes have descriptions in terms of
singular values of matrices over $\R$, $\C$ or $\bbH$. Typically,
they consist of intersections of balls of various radii with respect
to different Ky Fan matrix norms.

Some of the results presented here are taken from the 2018 doctoral
thesis \cite{kob} of the first author, written under the guidance of
the second.


\section{Polar representations and orbitopes}

\begin{lab}\label{polar}%
We recall the notion of polar representation, following Dadok
\cite{da}. Let $K$ be a Lie group with Lie algebra $\k$, and let
$K\to O(V)$ be a linear representation of $K$ on a
(finite-dimensional) real vector space $V$, preserving a fixed inner
product.
For every $x\in V$, the linear subspace $\a_x=(\k x)^\bot$ of $V$
meets every $K$-orbit \cite[Lemma~1]{da}. A vector $x\in V$ is said
to be regular if the orbit $Kx$ has maximal dimension.
The subspaces $\a_x$, for $x$ regular, are called the \emph{Cartan
subspaces} of the representation. The Cartan subspaces are all
$K$-conjugate if, and only if, they are orthogonal to the $K$-orbits
passing through them \cite[Prop.~2]{da}.
The representation $\rho$ is said to be \emph{polar} if these
equivalent conditions are satisfied.

Every Riemannian symmetric space $M$ gives rise to a polar
representation, namely the action of the isotropy group on the
tangent space $T_e(M)$ at a point $e\in M$. In other words, let $G$
be a connected real semisimple Lie group with Lie algebra $\g$, let
$\g=\k\oplus\p$ be a Cartan decomposition, and let $K\subset G$ be
the analytic subgroup corresponding to $\k$. Then $K$ is a maximal
compact subgroup of~$G$, and the adjoint action of $K$ on $\p$ is an
example of a polar representation. This representation is irreducible
if and only if $\g$ is simple as a Lie algebra.
Conversely, as far as the orbit structure is concerned, these are the
only examples of polar representations of connected Lie groups:
\end{lab}

\begin{prop}\label{dadokprop6}%
\emph{(Dadok \cite[Proposition~6]{da})}
Let $V$ be a polar representation of a connected Lie group $H$. There
is a real semisimple Lie algebra $\g$ with Cartan decomposition
$\g=\k\oplus\p$, together with a vector space isomorphism
$f\colon V\to\p$, such that $f(H\cdot x)=\Ad(K)\cdot f(x)$ for every
$x\in V$, where $K\subset\Aut(\g)$ is the analytic subgroup with Lie
algebra~$\k$.
\end{prop}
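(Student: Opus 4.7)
The plan is to reduce the statement to a combinatorial problem on a Cartan subspace and then to match the resulting data against the Cartan classification of Riemannian symmetric pairs. First, fix a Cartan subspace $\a\subset V$ for the polar representation. Because $\a=(\k x)^\bot$ is orthogonal to every $H$-orbit it meets, the orbit space $V/H$ is canonically identified with $\a/W$, where $W:=N_H(\a)/Z_H(\a)$ is the natural finite group acting on $\a$. The crucial first step is to prove that $W$ is a finite group generated by orthogonal reflections. Finiteness follows from the discreteness of $Hx\cap\a$ at a regular point $x$. Reflection-generation comes from analysing the slice representation of the isotropy group at a regular point of a codimension-one stratum in $V$: polarity descends to slices, so the slice is one-dimensional and the isotropy acts by $\pm 1$, producing a reflection in $W$ whose mirror is the wall bordering the stratum.

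Next, decompose $V=V_1\oplus\cdots\oplus V_r$ into $H$-irreducibles. Each $V_i$ is itself polar, with Cartan subspace $\a_i:=\a\cap V_i$, and $W=W_1\times\cdots\times W_r$ decomposes compatibly, so it is enough to treat the irreducible case. In that case the pair $(\a,W)$ must be \emph{crystallographic}: the connectedness of $H$ produces an integral weight structure on the complexification of $V$ that rules out the non-crystallographic Coxeter groups $H_3$, $H_4$ and $I_2(m)$ with $m\notin\{2,3,4,6\}$. The remaining crystallographic Coxeter groups are exactly the ones realised as restricted Weyl groups of Cartan decompositions of real semisimple Lie algebras.

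Third, for the given $(\a,W)$ pick a real semisimple Lie algebra $\g=\k'\oplus\p$ whose restricted root datum on a maximal abelian subspace $\a'\subset\p$ matches $(\a,W)$. Any $W$-equivariant linear isometry $\a\isoto\a'$ extends uniquely to a bijection $f\colon V\to\p$ that sends $H$-orbits to $\Ad(K)$-orbits, because both orbit spaces are canonically a closed Weyl chamber in $\a$, respectively $\a'$. Linearity of $f$ is checked on the dense set of regular points, using that regular orbits are equivariantly diffeomorphic to $H/Z_H(x)\cong K/Z_K(f(x))$, and is extended by continuity.

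The \textbf{main obstacle} is the classification in the middle paragraph: one must verify that no ``exotic'' irreducible polar representation exists beyond the symmetric-space list, both by excluding non-crystallographic Coxeter actions and by ruling out non-symmetric-space extensions of a given Weyl group to an ambient representation. Dadok's original proof proceeds by a case analysis through the crystallographic Coxeter types, and in low rank one has to exploit coincidences among classical Lie algebras to pin down each candidate.
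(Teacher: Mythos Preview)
The paper does not prove this proposition; it is quoted verbatim from Dadok's original article and used as a black box. So there is no ``paper's own proof'' to compare against, and your sketch is really an outline of Dadok's argument itself.

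That outline has the right overall shape, but the third paragraph contains a genuine gap. Matching only the pair $(\a,W)$ is not enough to pin down the symmetric-space model: many non-isomorphic Cartan decompositions share the same restricted Weyl group (for instance $so(1,n)$, $su(1,n)$, $sp(1,n)$ all have rank one with $W\cong\Z/2$, yet $\dim\p$ differs). What must be matched is the restricted root system together with the multiplicities of the restricted roots, equivalently the full orbit geometry, not just the chamber combinatorics. Without that, there is no reason for $\dim V=\dim\p$, let alone for a linear isomorphism to exist.

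Relatedly, your ``extension'' of a $W$-equivariant isometry $\a\isoto\a'$ to a linear bijection $f\colon V\to\p$ is not a construction. A bijection of orbit spaces (both Weyl chambers) gives at best a set-theoretic bijection $V\to\p$ once you somehow identify each $H$-orbit with the corresponding $K$-orbit; the sentence ``linearity of $f$ is checked on the dense set of regular points, using that regular orbits are equivariantly diffeomorphic'' does not produce a linear map, since $H/Z_H(x)$ and $K/Z_K(f(x))$ are homogeneous spaces for different groups with no canonical linear identification. In Dadok's actual proof the linear map $f$ is not obtained by any abstract extension procedure: it falls out of the explicit case-by-case classification that you correctly flag as the main obstacle in your final paragraph. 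In other words, the classification \emph{is} the proof, and your third paragraph should be absorbed into it rather than presented as a separate step.
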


\begin{rem}\label{dirprod}%
Let $V=\bigoplus_{i=1}^nV_i$ be the irreducible decomposition of an
arbitrary polar representation of $K$. Then each irreducible summand
$V_i$ is again a polar representation of $K$ (Dadok
\cite[Theorem~4]{da}). Moreover, if $x=\sum_{i=1}^nx_i\in V$ with
$x_i\in V_i$ for all~$i$, then $\scrO_x=\scrO_{x_1}\times\cdots\times
\scrO_{x_n}$ with $\scrO_{x_i}=\conv_{V_i}(Kx_i)$. Indeed, such a
direct product decomposition holds for the $K$-orbit of~$x$ by
Dadok's theorem, and hence it holds for the convex hulls as well.
In other words, every polar orbitope is a cartesian direct product of
irreducible polar orbitopes.
\end{rem}

\begin{lab}
Recall that a \emph{spectrahedron} in $\R^n$ is the solution set of a
linear matrix inequality (LMI). So $S\subset\R^n$ is a spectrahedron
if there exist complex hermitian matrices $A_0,\dots,A_n$ of some size
$d\times d$ such that
$$S\>=\>\Bigl\{x\in\R^n\colon A_0+\sum_{i=1}^nx_iA_i\succeq0\Bigr\},$$
where $A\succeq0$ means that $A$ is positive semidefinite (all
eigenvalues are nonnegative). Note that an LMI with complex hermitian
$d\times d$ matrices may be converted into an equivalent LMI with
real symmetric $2d\times2d$ matrices, which is why spectrahedra are
often defined via real symmetric LMIs.
\end{lab}

\begin{lab}\label{orbitopsasum}%
Let $V$ be a linear representation (real and finite-dimensional) of a
compact Lie group $K$. Given $x\in V$, the convex hull $\conv(Kx)$ of
the orbit of $x$ in $V$ is called the ($K$-) \emph{orbitope} of $x$.
We usually denote it by $\scrO_x=\conv(Kx)$, assuming that $K$ and
$V$ are understood. The orbitope $\scrO_x$ is a compact convex set on
which $K$ acts, and whose set of extreme points coincide with the
orbit~$Kx$.

We will study orbitopes $\scrO_x=\conv(Kx)$ in polar representations
$V$ of compact connected Lie groups $K$. Using Proposition
\ref{dadokprop6}, we can and will always assume that $V=\p$ where
$\g=\k\oplus\p$ is a Cartan decomposition of a real semisimple Lie
algebra $\g$, and that the action is the adjoint action of the
analytic subgroup $K$ of $\Aut(\g)$.
\end{lab}

\begin{lab}
If $V$ is a vector space over $\R$, the dual vector space is denoted
by $V^\du=\Hom(V,\R)$. The convex hull of a set $M\subset V$ is
written $\conv(M)$. Our notation for matrix groups and matrix Lie
algebras tries to follow the conventions in \cite{kn}.
In particular, $SU(n)$, $SO(n)$, $Sp(n)$ are the classical compact
Lie groups, $su(n)$, $so(n)$, $sp(n)$ are their Lie algebras, etc.
The diagonal $n\times n$ matrix with diagonal entries $a_1,\dots,a_n$
is denoted $\diag(a_1,\dots,a_n)$.
\end{lab}


\section{Background on semisimple real Lie algebras}%
\label{lienot}%

We use standard notation and terminology for semisimple Lie groups
and Lie algebras, and we'll recall it briefly here. As a general
reference
we refer to Knapp's monograph \cite{kn}, in particular to Chapter~6.

\begin{lab}
Let $\g$ be a semisimple Lie algebra over $\R$, hence a finite direct
sum of simple (nonabelian) Lie algebras over $\R$. Recall that if
$\g$ is simple then either $\g$ has a structure as a (simple) Lie
algebra over~$\C$, or else $\g_\C:=\g\otimes\C$ is a simple Lie
algebra over~$\C$.

Let $\theta$ be a Cartan involution on $\g$ and $\g=\k\oplus\p$ the
corresponding Cartan decomposition.
With respect to the Killing form $\bil\hvar\hvar$ of $\g$, the
decomposition $\k\oplus\p$ is orthogonal, and the restriction of
$\bil\hvar\hvar$ to $\k$ (resp.~$\p$) is negative (resp.\ positive)
definite.
\end{lab}

\begin{lab}
Choose a maximal commutative subspace $\a$ of $\p$, and let
$\Sigma\subset\a^\du=\Hom(\a,\R)$ be the system of restricted roots
of $(\g,\a)$. This is an abstract root system, possibly non-reduced.
Fixing an ordering on $\a^\du$ we have the sets
$\Sigma_\plus\subset\Sigma$ of positive restricted roots and
$\Gamma=\{\beta_1,\dots,\beta_n\}\subset\Sigma_\plus$ of simple
restricted roots.
\end{lab}

\begin{lab}\label{comporderings}%
Let $\t$ be a maximal commutative subalgebra of the centralizer
$Z_\k(\a)$ of $\a$ in $\k$. Then $\h:=i\t\oplus\a$ is a commutative
subalgebra of $\g_\C$ (the complexification of $\g$) for which
$\h_\C$ is a Cartan algebra of $\g_\C$. The rank of $\g_\C$ will be
denoted by $l=\dim(\h)$, the real rank of $\g$ by $n=\dim(\a)$.

The Killing form of $\g_\C$ restricted to $\h$ is a euclidean inner
product on $\h$, denoted by $\bil\hvar\hvar$. All roots of
$(\g_\C,\h_\C)$ are real-valued on $\h$. We let
$\Delta\subset\h^\du=\Hom(\h,\R)$ be the root system of
$(\g_\C,\h_\C)$.
Extending the ordering on $\a^\du$ suitably to $\h^\du$ (e.g.\ take
the lexicographic order on $\h^\du=(\a+i\t)^\du$ with $\a$ before
$i\t$ \cite[p.~377]{kn}),
we let $\Delta_\plus\subset\Delta$ be the positive roots and
$\Pi=\{\alpha_1,\dots,\alpha_l\}\subset\Delta_\plus$ the simple
roots.
\end{lab}

\begin{lab}
Let $G=\Aut(\g)_0$, the identity component of the automorphism group
of $\g$. The Lie algebra of $G$ is naturally identified with $\g$.
The analytic subgroup $K\subset G$ with Lie algebra $\k$ is a maximal
compact subgroup of~$G$, and $K$ acts on ($\k$ and) $\p$ via the
adjoint action. We denote this action simply by $gx:=\Ad(g)x$, for
$g\in K$ and $x\in\p$. Every element of $\p$ is $K$-conjugate to an
element of~$\a$.
Note that the $K$-action preserves the quadratic (Killing) form
on $\p$. This action is a polar representation of $K$, and as far as
the orbit structure is concerned, every polar representation of a
connected Lie group arises in this way (Proposition
\ref{dadokprop6}).
\end{lab}

\begin{lab}
Note that the direct sum decomposition $\h=i\t\oplus\a$ is orthogonal
with respect to the Killing form. The restriction map
$r\colon\h^\du\to\a^\du$ satisfies $r(\Delta_\plus)\subset
\Sigma_\plus\cup\{0\}$ and $r(\Pi)\subset \Gamma\cup\{0\}$.
Conversely, there exists an involution $\alpha\mapsto\alpha'$ on
$\Pi$ such that for every $\beta\in\Gamma$, the set
$\{\alpha\in\Pi\colon r(\alpha)=\beta\}$ has the form
$\{\alpha,\alpha'\}$ with $\alpha\in\Pi$.
\end{lab}

\begin{lab}\label{weyl}%
From the inner product $\bil\hvar\hvar$ on $\h$ we get linear
isomorphisms $\h\isoto\h^\du$ and $\a\isoto\a^\du$. We use them to
transfer the inner product from $\h$ to $\h^\du$ and from $\a$ to
$\a^\du$.

For every restricted root $\beta\in\Sigma$ let $s_\beta\colon
\a^\du\to\a^\du$ denote the root reflection $\gamma\mapsto
\gamma-\frac{2\bil\beta\gamma}{|\beta|^2}\beta$. We always write
$W=\langle s_\beta\colon\beta\in\Sigma\rangle$ for the (restricted)
Weyl group of $(\g,\a)$.
Via the identification $\a\isoto\a^\du$ we consider $W$ as a
reflection group on $\a$ as well: For $\beta\in\Sigma$, if
$h_\beta\in\a$
is the element with $\bil{h_\beta}x=\beta(x)$ for all
$x\in\a$, then $s_\beta$ acts on $\a$ by
$s_\beta(x)=x-\frac{2\beta(x)}{|h_\beta|^2}h_\beta$. It is well known
that $W$ is naturally isomorphic to $N_K(\a)/Z_K(\a)$
\cite[6.57]{kn}.

We always denote by
$$C\>=\>\bigl\{x\in\a\colon\beta_1(x)\ge0,\dots,\beta_n(x)\ge0
\bigr\}$$
the (closed) Weyl chamber for the action of $W$. So $C$ is a
polyhedral convex cone, and every element of $\a$ is $W$-conjugate to
a unique element of $C$.
\end{lab}

\begin{lab}\label{dualbas}%
Let $\lambda_1,\dots,\lambda_l$ be the basis of $\h^\du$ that is dual
to $\Pi=\{\alpha_1,\dots,\alpha_l\}$, so $\bil{\alpha_i}{\lambda_k}=
\delta_{ik}$ for $i,k=1,\dots,l$. Similarly, let $\mu_1,\dots,\mu_n
\in\a^\du$ be defined by $\bil{\beta_j}{\mu_k}=\delta_{jk}$ for
$j,k=1,\dots,n$.
The fundamental weights of $(\g_\C,\h_\C)$ are the linear forms
$\omega_i\in\h^\du$ defined by $\omega_i=\frac12|\alpha_i|^2
\lambda_i$ ($i=1,\dots,l$), so $\frac{2\bil{\omega_i}{\alpha_k}}
{|\alpha_k|^2}=\delta_{ik}$ for $i,k=1,\dots,l$. For each linear
combination $\omega=\sum_{i=1}^lm_i\omega_i$ with integer
coefficients $m_i\ge0$, there exists a unique (up to isomorphism)
irreducible representation of $\g_\C$ with highest weight $\omega$.
The irreducible representation of $\g_\C$ with highest weight
$\omega_i$ is called the $i$-th fundamental representation of
$\g_\C$, we'll denote it by~$\rho_i$  ($i=1,\dots,l$).
\end{lab}

\begin{lab}
As before, let $\Pi\subset\Delta_\plus\subset\h^\du$ resp.\
$\Gamma\subset\Sigma_\plus\subset\a^\du$ be the systems of simple
resp.\ simple restricted roots. We need to relate the dual bases of
$\Pi$ and $\Gamma$ to each other. Given $\alpha\in\Pi$, let
$\lambda_\alpha\in\h^\du$ be defined by $\bil{\lambda_\alpha}
{\alpha'}=\delta_{\alpha,\alpha'}$ for each $\alpha'\in\Pi$. Given
$\beta\in\Gamma$, let $\mu_\beta\in\a^\du$ be defined by
$\bil{\mu_\beta}{\beta'}=\delta_{\beta,\beta'}$ for each
$\beta'\in\Gamma$. (So if $\alpha=\alpha_i$ then
$\lambda_\alpha=\lambda_i$, and similarly if $\beta=\beta_j$ then
$\mu_\beta=\mu_j$.) The following fact is certainly well-known, but
we haven't been able to find a suitable reference for it:
\end{lab}

\begin{lem}\label{restrfundgew}%
Let $\alpha\in\Pi$. If $\beta:=r(\alpha)\ne0$ then
$r(\lambda_\alpha)=q\mu_\beta$ for some rational number $q>0$. If
$r(\alpha)=0$ then $r(\lambda_\alpha)=0$.
\end{lem}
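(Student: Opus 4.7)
My plan is to use the Satake involution $\iota$ on $\Pi$, extended linearly to $\h^\du$. This involution is induced by a Dynkin diagram automorphism of $\g_\C$, hence is an isometry of $(\h^\du,\bil{\hvar}{\hvar})$, and by construction satisfies $r\iota=r$ on $\Pi$ (so by linearity on $\h^\du$). Two consequences I will use are: (i) $\iota(\lambda_\alpha)=\lambda_{\iota(\alpha)}$ (isometries send a basis' dual basis to the dual basis of its image), and (ii) $\iota$ fixes $\a^\du$ pointwise. For (ii), any $\mu\in\a^\du$ satisfies $\iota(\mu)-\mu\in\ker r=(i\t)^\du$, while the isometry $\|\iota(\mu)\|=\|\mu\|$ together with the orthogonality $\a^\du\perp(i\t)^\du$ forces this difference to vanish. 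In particular $\iota$ fixes each $\beta'\in\Gamma$.

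For the first case, assume $\alpha\in\Pi_1:=\Pi\setminus\Pi_0$ with $\beta:=r(\alpha)$. Fix $\beta'\in\Gamma$ and expand in the basis $\Pi$ of $\h^\du$,
\[
\beta'\;=\;\sum_{\gamma\in\Pi}c_\gamma\,\gamma.
\]
Because $r$ is the orthogonal projection onto $\a^\du$ and $\beta'\in\a^\du$, one has $\bil{r(\lambda_\alpha)}{\beta'}_{\a^\du}=\bil{\lambda_\alpha}{\beta'}_{\h^\du}=c_\alpha$. Applying $\iota$ to the expansion and using $\iota(\beta')=\beta'$ gives $c_\gamma=c_{\iota(\gamma)}$ for all $\gamma\in\Pi$. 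Applying $r$ instead and comparing with $\beta'=\sum_{\delta}\delta_{\delta,\beta'}\delta$ in the basis $\Gamma$ of $\a^\du$ yields, for each $\delta\in\Gamma$,
\[
\sum_{\gamma\in F_\delta}c_\gamma\;=\;\delta_{\delta,\beta'},\qquad F_\delta:=\{\gamma\in\Pi\colon r(\gamma)=\delta\}.
\]
Since $F_\beta=\{\alpha,\iota(\alpha)\}$ with $c_\alpha=c_{\iota(\alpha)}$, this pins down $c_\alpha=|F_\beta|^{-1}\delta_{\beta,\beta'}$. As $\{\mu_\delta\}_{\delta\in\Gamma}$ is the inner-product dual basis of $\Gamma$, we conclude $r(\lambda_\alpha)=|F_\beta|^{-1}\mu_\beta$, so $q=|F_\beta|^{-1}\in\{1,\tfrac12\}\subset\Q_{>0}$ — and the pre-images of each fiber being of size at most two is exactly the involution statement recalled in the paragraph preceding the lemma.

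The second case, $\alpha\in\Pi_0$ asserting $r(\lambda_\alpha)=0$, is where I expect the main obstacle. On $\Pi_0$ the symmetry $c_\gamma=c_{\iota(\gamma)}$ is vacuous and the fiber identity above does not involve $c_\alpha$; the only remaining input is the vanishing of the $(i\t)^\du$-component of $\beta'$, which reads
\[
\sum_{\gamma\in\Pi_0}c_\gamma\,\gamma\;=\;-\sum_{\gamma\in\Pi_1}c_\gamma\bigl(\gamma-r(\gamma)\bigr)\qquad\text{in }(i\t)^\du.
\]
With the values of $c_\gamma$ on $\Pi_1$ already determined (nonzero only on $F_{\beta'}$, where they equal $|F_{\beta'}|^{-1}$), to conclude $c_\alpha=0$ for all $\alpha\in\Pi_0$ one must show that the right-hand side is orthogonal to $\spn(\Pi_0)$ inside $(i\t)^\du$; since $\Pi_0$ is the simple-root system of $\Delta_0:=\Delta\cap(i\t)^\du$, this would then force all $c_\alpha$ to vanish. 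Establishing this orthogonality appears to be the crux, and I expect it to require a finer structural input — either the detailed incidence pattern between white and black nodes of the Satake diagram (so that the $(i\t)^\du$-parts of the paired simple roots in $F_{\beta'}$ align with the $\iota$-invariant subspace of $(i\t)^\du$), or a type-by-type inspection after reducing to simple $\g$.
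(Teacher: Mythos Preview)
Your first case is correct and close in spirit to the paper's argument, which likewise exploits the two-element fibers $\{\alpha,\alpha'\}$ over each $\beta\in\Gamma$. Your second case, however, is left as a genuine gap, and you overestimate its difficulty: no type-by-type inspection and no fine Satake-diagram combinatorics are needed.

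The missing ingredient is the concrete identification of the partner $\alpha'$. Rather than working only with a diagram automorphism $\iota$, use the Cartan involution: for $\gamma\in\Pi_1$ set $\gamma':=-\theta^\du(\gamma)$. Since $\theta^\du$ is $+\id$ on $(i\t)^\du$ and $-\id$ on $\a^\du$, the operator $\tfrac12(\id-\theta^\du)$ is exactly the orthogonal projection onto $\a^\du$, so
\[
\gamma+\gamma'\;=\;\gamma-\theta^\du(\gamma)\;=\;2\,r(\gamma)\;=\;2\wt{\beta'}
\]
as elements of $\h^\du$ (where $\wt{\beta'}$ is $\beta'$ viewed in $\h^\du$ via the inclusion adjoint to $r$). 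The one structural fact needed---quoted in the paper from \cite{bu}---is that $\gamma'\in\Pi$ for $\gamma\in\Pi_1$, and that $\{\gamma,\gamma'\}$ is then exactly the fiber $F_{\beta'}$. You are already granting this fiber description via the paragraph before the lemma; what you were not using is that the pairing is realized by $-\theta^\du$, which is what yields the displayed identity. That identity \emph{is} the expansion of $\beta'$ in the basis $\Pi$: every coefficient $c_\delta$ vanishes except on $F_{\beta'}$. Hence $c_\alpha=0$ for all $\alpha\in\Pi_0$, i.e.\ $\bil{r(\lambda_\alpha)}{\beta'}=0$ for every $\beta'\in\Gamma$, giving $r(\lambda_\alpha)=0$.

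Phrased in your own set-up: the right-hand side of your displayed relation in $(i\t)^\du$ equals $-|F_{\beta'}|^{-1}\sum_{\gamma\in F_{\beta'}}(\gamma-\beta')$, and the identity $\sum_{\gamma\in F_{\beta'}}\gamma=|F_{\beta'}|\cdot\wt{\beta'}$ makes this vanish identically; linear independence of $\Pi_0$ then forces all $c_\alpha=0$. Note that your linear extension of $\iota$ and the map $-\theta^\du$ agree on $\Pi_1$ but are different linear maps on $\h^\du$ (the former permutes $\Pi_0$, the latter sends $\Pi_0$ to $-\Pi_0$); the averaging identity $\gamma+\gamma'=2\wt{\beta'}$ comes from the latter, not from $\iota$ being an isometry. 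The same identity also streamlines your first case, yielding all coefficients $c_\gamma$ at once without the separate symmetry and fiber-sum steps.
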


In fact the argument shows that $r(\lambda_\alpha)=\mu_\beta$ if
$\alpha$ is the only element of $\Pi$ that restricts to $\beta$, and
$r(\lambda_\alpha)=\frac12\mu_\beta$ if there are two such elements.

\begin{proof}
Let $\a^\du\to\h^\du$, $\mu\mapsto\wt\mu$ denote the linear map
that is adjoint to the restriction map $r\colon\h^\du\to\a^\du$, so
$\bil{\lambda}{\wt\mu}=\bil{r(\lambda)}\mu$ for $\lambda\in\h^\du$
and $\mu\in\a^\du$. Let $\Pi_0=\{\alpha\in\Pi$: $r(\alpha)=0\}$ and
$\Pi_1=\Pi\setminus\Pi_0$. The Cartan involution $\theta$ of $\g$
induces $\id$ on $i\t$ and $-\id$ on $\a$. The dual involution
$\theta^\du$ on $\h^\du$ satisfies $-\theta^\du(\Pi)\subset\Pi$. For
$\alpha\in\Pi$ we abbreviate $\alpha':=-\theta^\du(\alpha)$. If
$\alpha\in\Pi_1$, then the only elements of $\Pi$ that restrict to
$\beta=r(\alpha)$ are $\alpha$ and $\alpha'$ (see \cite{bu} Prop.\
29.9 for these facts).

Let $\beta\in\Gamma$ be a restricted simple root, and let
$\alpha\in\Pi$ with $r(\alpha)=\beta$. From
$\alpha'=-\theta^\du(\alpha)$ we see $\alpha+\alpha'=2\wt\beta$.
If $\lambda\in\h^\du$ is dominant, i.e.\ satisfies
$\bil\lambda\gamma\ge0$ for all $\gamma\in\Pi$,
we conclude $\bil{r(\lambda)}\beta=\bil\lambda{\wt\beta}=\frac12
\bil\lambda{\alpha+\alpha'}\ge0$. This shows
\begin{equation}\label{inclus}%
\cone(r(\lambda_1),\dots,r(\lambda_l))\>\subset\>
\cone(\mu_1,\dots,\mu_n)
\end{equation}
in $\a^\du$.

On the other hand, for $\gamma\in\Pi$ the inner product
$\bil{\wt{\mu_\beta}}\gamma=\bil{\mu_\beta}{r(\gamma)}$ is $1$ if
$r(\gamma)=\beta$, and $0$ otherwise.
Therefore $\wt{\mu_\beta}=\frac12(\lambda_\alpha+\lambda_{\alpha'})$.
In particular, $\mu_\beta=r(\wt\mu_\beta)=\frac12r(\lambda_\alpha+
\lambda_{\alpha'})$, which proves the reverse inclusion
of~\eqref{inclus}.

If $\alpha'=\alpha$ then $r(\lambda_\alpha)=\mu_\beta$. It remains
to consider the case $\alpha'\ne\alpha$.
From $\alpha+\alpha'=2\wt\beta$ we see $\bil{r(\lambda_\alpha)}\beta=
\bil{\lambda_\alpha}{\wt\beta}=\frac12$,
and similarly $\bil{r(\lambda_{\alpha'})}\beta=\frac12$. On the other
hand, $r(\lambda_\alpha+\lambda_{\alpha'})=2\mu_\beta$, together with
\eqref{inclus}, implies that $r(\lambda_\alpha)$,
$r(\lambda_{\alpha'})$ are positive scalar multiples of $\mu_\beta$,
since $\mu_\beta$ generates an extreme ray of
$\cone(\mu_1,\dots,\mu_m)$.
Altogether it follows that $r(\lambda_\alpha)=r(\lambda_{\alpha'})=
\frac12\mu_\beta$.

Finally assume $r(\alpha)=0$, and let $\gamma\in\Pi$ be arbitrary
with $\beta=r(\gamma)\ne0$. Then $\bil{r(\lambda_\alpha)}\beta=
\bil{\lambda_\alpha}{\wt\beta}=\frac12\bil{\lambda_\alpha}
{\gamma+\gamma'}$ since $2\wt\beta=\gamma+\gamma'$, see above. But
$\alpha\notin\{\gamma,\gamma'\}$ since $r(\gamma)=r(\gamma')=\beta
\ne0$. Hence $\bil{r(\lambda_\alpha)}\beta=0$. This for all
$\beta\in\Gamma$ shows $r(\lambda_\alpha)=0$.
\end{proof}


\section{Kostant's convexity theorem}

We assume the setup of Section~\ref{lienot}. So $\g$ is a semisimple
real Lie algebra with Cartan decomposition $\g=\k\oplus\p$ and
maximal abelian subspace $\a$ of $\p$. This gives us the system
$\Sigma\subset\a^\du$ of reduced roots of $(\g,\a)$, on which the
reduced Weyl group $W$ acts. After fixing an ordering we have the
simple positive roots $\Gamma$ and the Weyl chamber $C\subset\a$.

The key technical tool for this paper is Kostant's convexity
theorem, together with its consequences. To a large extent, it
allows to reduce the study of the polar orbits $Kx$ and their
orbitopes $\scrO_x$ to a Weyl chamber, whereby the $K$-action on
$\p$ gets replaced by the $W$-action on~$\a$. We now recall this
theorem.

\begin{lab}\label{mompolytop}%
Let $T\subset\a$ be the cone that is dual to $C$ (with respect to
the $W$-invariant inner product). If
$\Gamma=\{\beta_1,\dots,\beta_n\}$ then $T=\{x\in\a\colon
\mu_1(x)\ge0,\dots,\mu_n(x)\ge0\}$ where $\mu_1,\dots,\mu_n\in\a^\du$
is the dual basis of $\Gamma$ as in \ref{dualbas}.
For $x\in\a$, the convex hull $P_x:=\conv(Wx)$ of the (restricted)
Weyl group orbit of $x$ will play a central role. In Hamiltonian
geometry, $P_x$ is called the \emph{momentum polytope} associated to
$x$ \cite{or}, a term that we will adopt.
According to Kostant, $P_x$ is characterized as follows:
\end{lab}

\begin{prop}\label{kost33}%
\cite[Lemma 3.3]{ko}
Let $x\in C$ and $y\in\a$.
\begin{itemize}
\item[(a)]
$y\in P_x$ if and only if $x-wy\in T$ for every $w\in W$.
\item[(b)]
If $y\in C$ then $y\in P_x$ if and only if $x-y\in T$.
\end{itemize}
In particular, $P_x\cap C=\{y\in C\colon\mu_j(y)\le\mu_j(x)$ for
$j=1,\dots,n\}$.
\end{prop}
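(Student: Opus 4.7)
The plan is to reduce everything to one combinatorial lemma about the Weyl group: for every $x\in C$ and $w\in W$ one has $x-wx\in T$. This will give the ``$\To$'' direction of both (a) and (b) at once, and a Hahn--Banach separation argument settles the converse.

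To prove the key lemma I induct on the length $\ell(w)$. The case $w=e$ is trivial. For the step, write $w=s_\beta w'$ with $\beta\in\Gamma$ and $\ell(w')=\ell(w)-1$, and decompose
$$x-wx\;=\;(x-w'x)+(w'x-s_\beta w'x).$$
The first summand lies in $T$ by induction; the second equals $\frac{2\beta(w'x)}{|h_\beta|^2}h_\beta$, and $h_\beta\in T=\cone(h_{\beta_1},\dots,h_{\beta_n})$ (the extreme rays dual to the defining inequalities $\mu_j\ge0$). The coefficient is non-negative because $\ell(s_\beta w')>\ell(w')$ forces $w'^{-1}\beta\in\Sigma_\plus$ (the standard length criterion in Coxeter groups), hence $\beta(w'x)=(w'^{-1}\beta)(x)\ge0$ as $x\in C$.

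Granted the key lemma, the easy direction is immediate. For ``$\To$'' in (a), take $y\in P_x$ and $w\in W$; by $W$-invariance of $P_x$ one can write $wy=\sum_it_iw_ix$ as a convex combination, whence $x-wy=\sum_it_i(x-w_ix)\in T$. Part ``$\To$'' of (b) is the case $w=e$. For ``$\Ot$'' of (a), pick $w_0$ with $w_0y\in C$; then $x-w_0y\in T$, so (b) gives $w_0y\in P_x$, and $y\in P_x$ by $W$-invariance.

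The hard step is ``$\Ot$'' of (b). Suppose $y\in C$ and $x-y\in T$ but $y\notin P_x$; by Hahn--Banach there exists $v\in\a$ with $\bil{v}{y}>\max_{w\in W}\bil{v}{wx}$. I would first extract the auxiliary estimate
$$\bil{ua}{b}\;\le\;\bil{a}{b}\qquad(a,b\in C,\ u\in W),$$
which is the inequality $\bil{a-ua}{b}\ge0$ and follows from the key lemma ($a-ua\in T$) together with $\bil{T}{C}\ge0$, the latter immediate from $\bil{h_{\beta_i}}{\mu_j}=\beta_i(\mu_j)=\delta_{ij}\ge0$. Now choose $w_0\in W$ with $v^\plus:=w_0v\in C$. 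Applying the estimate to $(v^\plus,x)$ and using $W$-invariance of $\bil{\cdot}{\cdot}$ yields $\max_w\bil{v}{wx}=\max_u\bil{uv^\plus}{x}=\bil{v^\plus}{x}$; applying it to $(v^\plus,y)$ with $u=w_0^{-1}$ yields $\bil{v}{y}=\bil{w_0^{-1}v^\plus}{y}\le\bil{v^\plus}{y}$. Combining gives $\bil{v^\plus}{y-x}>0$, contradicting $\bil{v^\plus}{x-y}\ge0$ (from $v^\plus\in C$, $x-y\in T$ and $\bil{T}{C}\ge0$). The ``In particular'' clause is then the reformulation of (b) via the description $T=\{z\colon\mu_j(z)\ge0\ \forall j\}$ recorded in~\ref{mompolytop}.
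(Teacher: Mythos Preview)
Your proof is correct. Note, however, that the paper does not actually give its own proof of this proposition: it is simply quoted from Kostant \cite[Lemma~3.3]{ko} as a black box. What you have supplied is essentially the classical argument (and close to Kostant's own): the core combinatorial fact $x-wx\in T$ for $x\in C$, proved by induction on length, combined with the inequality $\bil{ua}{b}\le\bil{a}{b}$ for $a,b\in C$ and a separating-hyperplane argument for the converse of (b). All the steps check out, including the length criterion $\ell(s_\beta w')>\ell(w')\iff w'^{-1}\beta\in\Sigma_\plus$ and the duality $\bil TC\ge0$ you use at the end. The argument also works unchanged when $\Sigma$ is non-reduced, which is relevant here since the restricted root system may be of type $BC_n$.
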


Recall that the Killing form of $\g$ restricts to a euclidean inner
product on $\p$. Let $\pi\colon\p\to\a$ denote the orthogonal
projection from $\p$ to~$\a$. Kostant's convexity theorem says:

\begin{thm}\label{kostant}%
\cite[Theorem 8.2]{ko}
If $x\in\a$ then $\pi(Kx)=P_x$.
\end{thm}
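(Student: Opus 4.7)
The plan is to prove the equality $\pi(Kx) = P_x$ by establishing both inclusions. For the direction $P_x \subseteq \pi(Kx)$, I would first observe the easy containment $Wx \subseteq \pi(Kx)$: using the identification $W \cong N_K(\a)/Z_K(\a)$ from \ref{weyl}, each $w \in W$ lifts to some $k \in K$ normalizing $\a$; then $kx \in \a$, and hence $\pi(kx) = kx = wx$. Once $\pi(Kx)$ is known to be convex, it contains $\conv(Wx) = P_x$.

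For the substantive inclusion $\pi(Kx) \subseteq P_x$, I would use a supporting-hyperplane argument. Given $y = \pi(gx)$ with $g \in K$, the assertion $y \in P_x = \conv(Wx)$ is equivalent, by the dual characterization of a convex hull over a compact set, to the scalar inequality
\[\bil{y}{z}\>\le\>\max_{w\in W}\bil{wx}{z}\qquad\text{for every } z\in\a.\]
Since $z\in\a$ and $\pi$ is the orthogonal projection onto $\a$ with respect to the Killing form, we have $\bil{y}{z} = \bil{gx}{z}$, so it suffices to bound $\max_{k\in K}\bil{kx}{z}$ by $\max_w\bil{wx}{z}$. I would consider the smooth function $f_z\colon K\to\R$, $f_z(k) := \bil{kx}{z}$, which attains its maximum at some $k_0 \in K$ by compactness of $K$. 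Differentiating along a one-parameter subgroup and using $\Ad$-invariance of the Killing form gives, for every $Y\in\k$,
\[0\>=\>\bil{[Y, k_0 x]}{z}\>=\>\bil{Y}{[k_0 x, z]}.\]
Now $[k_0 x, z] \in [\p,\p] \subseteq \k$, and the Killing form is negative definite (hence non-degenerate) on $\k$; this forces $[k_0 x, z] = 0$, so $k_0 x$ lies in the centralizer $Z_\p(z)$. For \emph{regular} $z$, meaning $\alpha(z) \ne 0$ for every $\alpha \in \Sigma$, the restricted-root space decomposition of $\g$ yields $Z_\p(z) = \a$; hence $k_0 x \in \a \cap Kx = Wx$ (by the standard fact that two elements of $\a$ are $K$-conjugate iff they are $W$-conjugate), and the desired inequality follows. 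The non-regular case follows by density of regular elements in $\a$ together with finiteness of $W$.

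The main obstacle I anticipate is the convexity of $\pi(Kx)$, which is needed for the reverse inclusion $P_x \subseteq \pi(Kx)$ and does not fall out of the above argument. The traditional route is by induction on the real rank, exploiting the $\mathfrak{sl}_2$-subalgebras attached to the simple restricted roots: for each $\beta \in \Gamma$ and each $u \in \a$, the image under $\pi$ of the orbit of $u$ by the associated compact rank-one subgroup $K_\beta \subseteq K$ contains the entire segment $[u, s_\beta u] \subseteq \a$. Chaining such elementary $\mathfrak{sl}_2$-segments along suitable words in $W$, one realizes every convex combination of $Wx$ as $\pi(kx)$ for some $k \in K$. Alternatively, one may invoke the Atiyah--Guillemin--Sternberg convexity theorem for symplectic momentum maps, of which the map $\pi|_{Kx}\colon Kx \to \a$ is a distinguished instance; this directly gives convexity. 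In either approach the delicate point is to reconcile the local rank-one analysis at each root with the global combinatorics of the Weyl group, ensuring that the chained $\pi$-images actually cover $P_x$ and not merely a proper subset.
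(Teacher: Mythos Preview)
The paper does not give its own proof of this theorem: it is quoted verbatim from Kostant \cite[Theorem~8.2]{ko} and used as a black box. So there is nothing in the paper to compare your sketch against.

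That said, your outline follows the broad shape of Kostant's original argument (critical-point analysis for the inclusion $\pi(Kx)\subset P_x$, and an inductive rank-one/$\mathfrak{sl}_2$ argument for convexity and the reverse inclusion), and is a reasonable plan. Two cautionary remarks. First, you invoke ``the standard fact that two elements of $\a$ are $K$-conjugate iff they are $W$-conjugate'' in the middle of your argument; in \emph{this} paper that fact appears as Corollary~\ref{cor3kost}, which is \emph{deduced from} Theorem~\ref{kostant}. So within the paper's logical order your appeal would be circular. The statement is of course provable independently (e.g.\ via conjugacy of maximal abelian subspaces in $\p$ and the identification $W\cong N_K(\a)/Z_K(\a)$), but you would need to supply or cite such an independent proof. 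Second, appealing to Atiyah--Guillemin--Sternberg is logically permissible but historically backwards: Kostant's theorem is one of the motivating examples for AGS, and invoking AGS here effectively replaces one deep theorem by another of at least equal difficulty. If you want a self-contained argument, the inductive $\mathfrak{sl}_2$ route you sketch (or Heckman's variant) is the honest path; the ``delicate point'' you flag---that the chained rank-one segments actually fill out all of $P_x$---is exactly where the work lies, and your proposal does not yet indicate how you would carry out that induction.
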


We record a few immediate consequences. Recall that
$\scrO_x=\conv(Kx)$ denotes the convex hull of the $K$-orbit of $x$
in~$\p$.

\begin{cor}\label{cor2kost}%
If $x\in\a$ then $\scrO_x\cap\a=P_x$. Hence $\pi(\scrO)=\scrO\cap\a$
holds for every $K$-invariant convex subset $\scrO$ of\/~$\p$.
\end{cor}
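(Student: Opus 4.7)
The plan is to establish the two statements in turn, using Kostant's theorem (Theorem~\ref{kostant}) together with the linearity of the orthogonal projection $\pi$. Since $\pi$ is linear, it commutes with convex hulls: $\pi(\conv(S))=\conv(\pi(S))$ for any $S\subset\p$.

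For the first statement $\scrO_x\cap\a=P_x$, I would show both inclusions. The inclusion $P_x\subset\scrO_x\cap\a$ is easy: via the natural isomorphism $W\cong N_K(\a)/Z_K(\a)$ recalled in \ref{weyl}, every $W$-translate of $x$ is also a $K$-translate, so $Wx\subset Kx$ and hence $P_x=\conv(Wx)\subset\conv(Kx)=\scrO_x$; since $P_x\subset\a$ this gives $P_x\subset\scrO_x\cap\a$. For the reverse inclusion, take $y\in\scrO_x\cap\a$. Since $y\in\a$ we have $\pi(y)=y$, and therefore
$$y\>=\>\pi(y)\>\in\>\pi(\scrO_x)\>=\>\pi(\conv(Kx))\>=\>\conv(\pi(Kx))\>=\>\conv(P_x)\>=\>P_x,$$
where the crucial third equality is Kostant's theorem and the last uses that $P_x$ is already convex.

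For the second statement, let $\scrO\subset\p$ be any $K$-invariant convex subset. The inclusion $\scrO\cap\a\subset\pi(\scrO)$ is immediate because $\pi$ restricts to the identity on~$\a$. Conversely, let $y\in\pi(\scrO)$ and write $y=\pi(z)$ for some $z\in\scrO$. Since every element of $\p$ is $K$-conjugate to an element of~$\a$, pick $k\in K$ with $x:=kz\in\a$. By $K$-invariance and convexity of $\scrO$, the orbitope $\scrO_x=\conv(Kx)\subset\scrO$. Then $y=\pi(k^{-1}x)\in\pi(Kx)=P_x=\scrO_x\cap\a\subset\scrO\cap\a$, again invoking Kostant and the first part.

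There is essentially no serious obstacle here; everything is a direct consequence of Theorem~\ref{kostant} once one observes that linear projection commutes with convex hulls and that $Wx\subset Kx$. The only point deserving care is the bookkeeping in the second part: one needs the $K$-invariance of $\scrO$ to pass from the point $z$ to a conjugate point in $\a$, so that the first part of the corollary can be applied to the orbitope inside $\scrO$.
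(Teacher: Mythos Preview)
Your proof is correct and follows essentially the same approach as the paper's. The paper argues more tersely---it just notes that $\pi(\scrO_x)=P_x$ follows from Kostant's theorem, giving $\scrO_x\cap\a\subset P_x$, with the reverse inclusion immediate from $Wx\subset Kx$; for the second assertion it simply observes that every $K$-orbit meets~$\a$, leaving the reader to unpack exactly the argument you wrote out in detail.
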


\begin{proof}
Theorem \ref{kostant} implies $\pi(\scrO_x)=P_x$,
and hence $\scrO_x\cap\a\subset P_x$. The reverse inclusion is
obvious since $Wx\subset Kx$.
The second assertion follows from the first, since every $K$-orbit
in $\p$ meets $\a$.
\end{proof}

\begin{cor}\label{cor3kost}%
If $x,\,y\in\a$ are $K$-conjugate then they are $W$-conjugate. Every
$K$-orbit in $\p$ intersects $C$ in a unique element.
\end{cor}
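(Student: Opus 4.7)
The plan is to deduce both claims from Corollary \ref{cor2kost} and Proposition \ref{kost33}(b), the key geometric input being the pointedness of the cone~$T$.

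I would start with the uniqueness part of the second assertion, since the first assertion reduces to it. Suppose $x,y\in C$ are $K$-conjugate, say $y=gx$ with $g\in K$. Then $y\in Kx\cap\a\subseteq\scrO_x\cap\a=P_x$ by Corollary \ref{cor2kost}, and since both $x$ and $y$ lie in $C$, Proposition \ref{kost33}(b) yields $x-y\in T$. The hypothesis is symmetric in $x$ and $y$ (as $Kx=Ky$), so interchanging their roles gives $y-x\in T$ as well. Now $T=\{z\in\a\colon\mu_j(z)\ge0,\ j=1,\dots,n\}$ is a simplicial cone, since the $\mu_j$ form a basis of $\a^\du$; in particular $T\cap(-T)=\{0\}$, forcing $x=y$.

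For the first assertion, given $K$-conjugate $x,y\in\a$, I would pass to the unique $x_0,y_0\in C$ with $x_0\in Wx$, $y_0\in Wy$ (existence and uniqueness of these representatives being recalled in \ref{weyl}). Via the isomorphism $W\cong N_K(\a)/Z_K(\a)$ from \ref{weyl}, every $W$-orbit is contained in a $K$-orbit, so $x_0$ and $y_0$ are themselves $K$-conjugate; uniqueness just established gives $x_0=y_0$, and hence $x$ and $y$ are both $W$-conjugate to the same element of~$C$, in particular $W$-conjugate. Existence in the second assertion is then immediate: every element of $\p$ is $K$-conjugate to a point of~$\a$, which is in turn $W$-conjugate to a point of~$C$.

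I do not anticipate any serious obstacle: once Kostant's theorem is available in the form of Corollary \ref{cor2kost}, the argument is elementary polyhedral geometry in~$\a$, and the pointedness of the dual cone~$T$ does all the work. The only delicate point to keep track of is that the reduction from $\a$ to~$C$ in the first assertion genuinely preserves the $K$-conjugacy relation, which is exactly what the identification $W\cong N_K(\a)/Z_K(\a)$ guarantees.
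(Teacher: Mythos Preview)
Your proof is correct and follows essentially the same approach as the paper: both arguments reduce to showing that $K$-conjugate elements $x,y\in C$ satisfy $\pm(x-y)\in T$ via Proposition~\ref{kost33}(b), and then invoke the pointedness of~$T$. The only cosmetic differences are that the paper cites Theorem~\ref{kostant} directly to get $P_x=P_y$ (rather than going through Corollary~\ref{cor2kost} to get $y\in P_x$), and that you spell out the reduction between the two assertions in more detail where the paper simply asserts their equivalence.
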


\begin{proof}
Both statements are equivalent, so it suffices to prove the second.
Let $x,\,y\in C$ be $K$-conjugate. Then $P_x=P_y$ by \ref{kostant},
so \ref{kost33}(b) implies $\pm(x-y)\in T$, whence $x-y=0$.
\end{proof}

\begin{example}\label{exsymsh}%
Let $n\ge2$ and $\g=sl(n,\R)$, the real $n\times n$ matrices of trace
zero. The resulting polar representation is the natural action of the
special orthogonal group $K=SO(n)$ on $\p=sym_0(n,\R)$, the trace
zero symmetric real matrices. The standard choice for $\a$ is to take
all diagonal matrices in $\p$. The (restricted) Weyl group is
$W=S_n$, the symmetric group, acting by permutation of the diagonal
elements of $x\in\a$. So the momentum polytope $P_x$ is the
\emph{permutahedron} of~$x$, namely the convex hull of all
permutations of $x$. Kostant's theorem \ref{kostant} specializes to
the (symmetric) Schur-Horn theorem (see \cite{hj} 4.3.45 and 4.3.48,
\cite{lrt}, \cite{sss}).
Likewise, the hermitian version of the Schur-Horn theorem arises from
\ref{kostant} if we take $\k=su(n,\C)$ and $\g=\k_\C$, so we get the
adjoint action of the special unitary group $K=SU(n)$ on (traceless)
hermitian $n\times n$ matrices, with the analogous theorem.

In Section \ref{sect:exs} we will discuss examples of polar orbitopes
in a systematic way.
\end{example}


\section{Polar orbitopes as spectrahedra}

\begin{lab}\label{cptorbtps}%
In general, orbitopes under compact connected linear groups $K$ can't
be expected to be spectrahedra. Examples are suitable
$SO(2)$-orbitopes like the $4$-dimensional Barvinok-Novik orbitope
\cite{si},
or the Grassmann orbitope $G_{3,6}$ of dimension~$20$
\cite[Theorem 7.6]{sss}, where the group $K=SO(6)$ is even
semisimple.
Using results from \cite{sch:hn} it is easy to construct orbitopes
under the $2$-torus $K=SO(2)\times SO(2)$ which are not even linear
projections of spectrahedra, for example the convex hull of
$\{(s,s^2,s^3,t,t^2,st,st^{-1})\colon s,t\in\C$, $|s|=|t|=1\}$ in
$\C^7$.
In fact it can be shown that in sufficiently high dimension, ``most''
(in a suitable sense) orbitopes under $SO(2)\times SO(2)$ fail to be
projected spectrahedra \cite{kob}.
\end{lab}

Given this general situation, we think the following theorem all
the more remarkable:

\begin{thm}\label{connpolorbsp}%
Any orbitope in a polar representation $V$ of a connected Lie group
$K$ is a spectrahedron.
\end{thm}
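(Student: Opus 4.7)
My plan is to reduce to an adjoint-type setting and then construct an explicit linear matrix inequality using fundamental representations of $\g_\C$. First, by Proposition \ref{dadokprop6}, I may replace $(K,V)$ by the adjoint action of $K$ on $\p$ for some Cartan decomposition $\g=\k\oplus\p$, since the vector space isomorphism $f\colon V\to\p$ carries orbitopes to orbitopes and being a spectrahedron is preserved under linear isomorphism. Remark \ref{dirprod} then lets me reduce to the case where $\g$ is simple, because a finite direct product of spectrahedra is a spectrahedron. Finally, replacing $x$ by a $K$-conjugate, I may assume $x\in C$.

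The core of the argument is to encode the Weyl-chamber description of the momentum polytope (Proposition \ref{kost33}(b)) as an LMI. For each simple restricted root $\beta_j\in\Gamma$ I pick a simple root $\alpha_{i_j}\in\Pi$ with $r(\alpha_{i_j})=\beta_j$ and work with the fundamental representation $\rho_j\colon\g_\C\to\gl(V_{\omega_j})$ of highest weight $\omega_j:=\omega_{i_j}$. Since the compact real form $\u=\k\oplus i\p$ preserves a hermitian inner product on $V_{\omega_j}$, the operator $\rho_j(y)$ is hermitian for every $y\in\p$. Integrating $\rho_j$ over the simply connected cover of $\Aut(\g)_0$ yields a unitary action of (a cover of) $K$ on $V_{\omega_j}$ that intertwines with $\Ad$ on $\p$, so the spectrum of $\rho_j(y)$ depends only on the $K$-orbit of $y$.

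The LMI I propose is
\[
\scrO_x\>=\>\Bigl\{y\in\p\colon \omega_j(x)\cdot I_{V_{\omega_j}}-\rho_j(y)\succeq0\text{ for }j=1,\dots,n\Bigr\}.
\]
To verify this, let $y^+\in C$ be the Weyl chamber representative of the $K$-orbit of $y$ (Corollary \ref{cor3kost}). Because $\omega_j$ is a highest weight, every weight $\mu$ of $\rho_j$ satisfies $\omega_j-\mu\in\cone(\Pi)$; restricting to $\a$ gives an element of $\cone(\Sigma_\plus\cup\{0\})$, which is nonnegative on $C$, so $\omega_j(y^+)$ is the largest eigenvalue of $\rho_j(y^+)$ and hence of $\rho_j(y)$. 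Thus the $j$-th LMI becomes $\omega_j(y^+)\le\omega_j(x)$. By Lemma \ref{restrfundgew}, $\omega_j|_\a$ is a positive multiple of $\mu_{\beta_j}$, so this is equivalent to $\mu_{\beta_j}(y^+)\le\mu_{\beta_j}(x)$. Proposition \ref{kost33}(b) then gives $y^+\in P_x\cap C$, and by Corollary \ref{cor2kost} together with $K$-invariance of $\scrO_x$, this is equivalent to $y\in\scrO_x$.

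The main obstacle is not any single hard step but the coordination between the full root system on $\h$, which parametrizes the fundamental representations, and the restricted root system on $\a$, which governs $P_x$ via Kostant. Lemma \ref{restrfundgew} supplies exactly the needed dictionary, and choosing the correct $\rho_{i_j}$ for each $\beta_j$ must be done consistently. Once this combinatorial matching is in place, the remaining analytic steps---hermiticity of $\rho_j(y)$ on $\p$, integration of $\rho_j$ to a compatible group action, and extracting the largest eigenvalue from the highest weight on $C$---are standard.
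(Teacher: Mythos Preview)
Your proposal is correct and follows essentially the same route as the paper: reduce via Proposition~\ref{dadokprop6} to the adjoint action on $\p$, pick for each simple restricted root $\beta_j$ a fundamental representation $\rho_j$ of $\g_\C$ whose highest weight restricts (by Lemma~\ref{restrfundgew}) to a positive multiple of $\mu_j$, and verify the resulting LMI by intersecting with the Weyl chamber and invoking Proposition~\ref{kost33}(b) and Corollary~\ref{cor2kost}. The paper carries this out as Theorem~\ref{polorbspekt} with the same ingredients; your reduction to simple $\g$ via Remark~\ref{dirprod} is not needed there (the argument already works for semisimple $\g$), but is harmless.
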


\begin{lab}
By Proposition \ref{dadokprop6} we may assume that $\g=\k\oplus\p$ is
a Cartan decomposition of a real semisimple Lie algebra, and that $K$
has Lie algebra $\k$ and acts on $V=\p$ via the adjoint
representation. We provide an explicit linear matrix inequality
description for any such orbitope.

Fix $\a$, $\h$ together with compatible orderings
(\ref{comporderings}), and let otherwise notation be as in
Section~\ref{lienot}.
In particular, $\Pi=\{\alpha_1,\dots,\alpha_l\}$ is the system of
simple roots of $(\g_\C,\h_\C)$, and
$\Gamma=\{\beta_1,\dots,\beta_n\}$ is the system of simple restricted
roots of $(\g,\a)$. For each $j=1,\dots,n$ choose an index
$i=i(j)\in\{1,\dots,l\}$ with $r(\alpha_{i(j)})=\beta_j$, and let
$\rho^j\colon\g_\C\to\End(V_j)$ be the (complex) irreducible
representation of $\g_\C$ with highest weight $\omega_{i(j)}$. So
$\rho^j$ is the $i(j)$-th fundamental representation of $\g_\C$, see
\ref{dualbas}. There exists an
hermitian inner product on $V_j$ making $\rho^j(x)$ self-adjoint for
all $x\in\p$ (Lemma \ref{ewprelmaxev}(a) below). In particular,
$\rho^j(x)$ has real eigenvalues for every $x\in\p$. A more precise
version of Theorem \ref{connpolorbsp} is:
\end{lab}

\begin{thm}\label{polorbspekt}%
Given $x\in\p$, the orbitope $\scrO_x=\conv(Kx)$ consists of all
$y\in\p$ such that for each $j=1,\dots,n$, all eigenvalues of
$\rho^j(y)$ are less or equal than the largest eigenvalue of
$\rho^j(x)$. In other words,
$$\scrO_x\>=\>\bigl\{y\in\p\colon\rho^j(y)\preceq c_j\cdot\id,\
j=1\dots,n\bigr\}$$
where $c_j$ is the maximal eigenvalue of $\rho^j(x)$.
\end{thm}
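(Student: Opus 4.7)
Let $S$ denote the spectrahedron on the right-hand side; we prove the two inclusions separately.

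The easy direction $\scrO_x\subset S$ is handled as follows. By definition $c_j$ is the largest eigenvalue of the hermitian operator $\rho^j(x)$, so $\rho^j(x)\preceq c_j\cdot\id$ and $x\in S$. For $X\in\k$ the identity $\rho^j([X,y])=[\rho^j(X),\rho^j(y)]$ yields $\rho^j(\Ad(\exp X)y)=e^{\rho^j(X)}\rho^j(y)e^{-\rho^j(X)}$; since $\rho^j(X)$ is skew-hermitian (Lemma \ref{ewprelmaxev}(a)), the operator $e^{\rho^j(X)}$ is unitary, so $\rho^j(gy)$ and $\rho^j(y)$ have the same spectrum for every $g\in K$ (using connectedness of $K$). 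Hence $Kx\subset S$, and convexity gives $\scrO_x\subset S$.

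For the reverse inclusion $S\subset\scrO_x$, the plan is to use Kostant's theorem to reduce everything to a combinatorial comparison inside the Weyl chamber $C$. Since both $S$ and $\scrO_x$ are $K$-invariant, and every $K$-orbit meets $C$ in a unique element (Corollary \ref{cor3kost}), we may assume $x,y\in C$; by $\scrO_x\cap\a=P_x$ (Corollary \ref{cor2kost}) it suffices to show $y\in P_x$. According to Proposition \ref{kost33}(b) this amounts to checking $\mu_j(y)\le\mu_j(x)$ for $j=1,\dots,n$.

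The heart of the argument is the following computation: for any $z\in C$, the maximal eigenvalue of $\rho^j(z)$ equals $\omega_{i(j)}(z)$. Indeed, by standard highest-weight theory every weight of $\rho^j$ has the form $\omega_{i(j)}-\sum_{k=1}^l n_k\alpha_k$ with integers $n_k\ge0$. For $z\in\a\subset\h$ one has $\alpha_k(z)=r(\alpha_k)(z)$, and since $r(\alpha_k)\in\Gamma\cup\{0\}$ this value is nonnegative on $C$. Hence every weight value at $z$ is bounded above by $\omega_{i(j)}(z)$, with equality at the highest weight itself. Now $\omega_{i(j)}=\frac12|\alpha_{i(j)}|^2\lambda_{i(j)}$, and Lemma \ref{restrfundgew} gives $r(\lambda_{i(j)})=q_j\mu_j$ with $q_j>0$ rational, so $\omega_{i(j)}|_\a=\kappa_j\mu_j$ for some $\kappa_j>0$. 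Thus on $C$ the defining inequalities of $S$ become $\kappa_j\mu_j(y)\le c_j=\kappa_j\mu_j(x)$, which is precisely $\mu_j(y)\le\mu_j(x)$, completing the proof.

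The main technical point is the identification of the largest eigenvalue of $\rho^j$ with the highest weight evaluated on $C$; the remainder is bookkeeping that transfers root-system data between $(\g_\C,\h_\C)$ and $(\g,\a)$ through the restriction map, which is exactly what Lemma \ref{restrfundgew} was set up to provide. Conceptually this explains the choice of the fundamental representations $\rho^1,\dots,\rho^n$: they are precisely tailored so that their maximal-eigenvalue functions on $C$ are positive multiples of the coordinate functions $\mu_1,\dots,\mu_n$ cutting out Kostant's polytope $P_x$.
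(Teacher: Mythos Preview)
Your proof is correct and follows essentially the same route as the paper's: reduce to the Weyl chamber $C$ by $K$-invariance, identify the maximal eigenvalue of $\rho^j$ on $C$ with $\omega_{i(j)}$ (this is the paper's Lemma \ref{ewprelmaxev}(b), which you rederive), use Lemma \ref{restrfundgew} to see that $\omega_{i(j)}|_\a$ is a positive multiple of $\mu_j$, and conclude via Proposition \ref{kost33}(b) and Corollary \ref{cor2kost}. The only cosmetic difference is that you handle the inclusion $\scrO_x\subset S$ separately via an explicit $K$-invariance argument, whereas the paper proves both inclusions at once by showing $\scrO_x\cap C=S\cap C$.
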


Upon choosing orthogonal bases of the representation spaces $V_j$,
this is an explicit description of $\scrO_x$ by linear matrix
inequalities, involving hermitian matrices in general. We remark that
the fundamental representations $\rho^j$ of $\g_\C$ are very well
known  and understood \cite{ti}, in particular so for the classical
Lie algebras.

\begin{rem}
Theorem \ref{polorbspekt} implies in particular that all faces in a
polar orbitope are exposed, since this is true in every
spectrahedron. This fact was proved before by Biliotti, Ghigi and
Heinzner \cite{bgh1}. Some spectrahedral representations contained
in, or closely related to, Theorem \ref{polorbspekt} were constructed
by Sanyal, Sottile and Sturmfels \cite{sss}, namely for symmetric
Schur-Horn orbitopes (see \ref{exsymsh}),
and also for skew-symmetric Schur-Horn orbitopes (see
\ref{unitskewherm}) and Fan orbitopes (see \ref{sonn} and
\ref{ovsso}). These latter orbitopes, as
considered in \cite{sss}, do not directly fall under the assumptions
of \ref{polorbspekt}, since the groups acting there are not connected
(full instead of special orthogonal groups). It is not hard, however,
to recover the results from \cite{sss} in our setup, see Remark
\ref{ovsso}.
In Saunderson-Parrilo-Willsky \cite{spw}, a spectrahedral
representation for the convex hull of the special orthogonal group
$SO(n)$ (and for its dual convex body) was found, see also Remarks
\ref{sonrem} and \ref{spwrem} below. Otherwise we believe that our
result is new.
\end{rem}

Before we give the proof of Theorem \ref{polorbspekt}, recall the
following well-known facts.

\begin{lem}\label{ewprelmaxev}%
Let $\rho\colon\g_\C\to\End(V)$ be a (complex) representation of
$\g_\C$.
\begin{itemize}
\item[(a)]
There exists an hermitian inner product on $V$ that makes
$\rho(x)$ self-adjoint for every $x\in\p$.
\item[(b)]
If $\rho$ is irreducible with highest weight $\omega$, and if
$x\in C$, then $\omega(x)$ is the largest eigenvalue of $\rho(x)$.
(Recall that $C$ denotes the Weyl chamber, see \ref{weyl}.)
\end{itemize}
\end{lem}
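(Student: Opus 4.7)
For part (a), the plan is the standard unitary trick. The compact real form $\u:=\k\oplus i\p$ of $\g_\C$ is a compact semisimple Lie algebra, so by a theorem of Weyl the simply connected Lie group $U$ with Lie algebra $\u$ is compact; consequently the restriction $\rho|_\u$ integrates to a complex representation of $U$ on $V$. I would then average any hermitian inner product on $V$ over $U$ against Haar measure to produce a $U$-invariant hermitian form $\bil{\hvar}{\hvar}$. With respect to this form, the derived representation sends every element of $\u$ to a skew-hermitian operator on $V$. In particular, for $x\in\p$ we have $ix\in\u$, hence $i\rho(x)=\rho(ix)$ is skew-hermitian, which is precisely the self-adjointness of $\rho(x)$.

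For part (b), I would combine the weight-space decomposition with the compatibility of orderings arranged in \ref{comporderings}. Since $\h_\C$ is a Cartan subalgebra of $\g_\C$ and $\h=i\t\oplus\a\subset\h_\C$ is the real form on which all weights are real-valued, $\rho$ is simultaneously diagonalizable on $\h$, and for $x\in\h$ the eigenvalues of $\rho(x)$ are exactly the values $\nu(x)$, where $\nu$ ranges over the weights of $\rho$ (counted with multiplicity). Because $\omega$ is the highest weight with respect to the chosen ordering on $\h^\du$, every weight $\nu$ satisfies $\omega-\nu=\sum_{i=1}^l n_i\alpha_i$ with integers $n_i\ge0$. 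Thus it suffices to show that $\alpha_i(x)\ge0$ for every $i$ and every $x\in C$. But for $x\in\a$ we have $\alpha_i(x)=r(\alpha_i)(x)$, and by construction $r(\alpha_i)\in\Gamma\cup\{0\}$; so $\alpha_i(x)\in\{\beta_1(x),\dots,\beta_n(x),0\}\subset[0,\infty)$ on the Weyl chamber $C$. Therefore $\omega(x)-\nu(x)=\sum_i n_i\alpha_i(x)\ge0$ for every weight $\nu$, and $\omega(x)$ is the largest eigenvalue of $\rho(x)$.

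Neither half conceals a serious obstacle: (a) is the unitary trick and (b) is a bookkeeping exercise on top of the highest-weight theory. The only point deserving emphasis is that the ordering on $\h^\du$ was set up in \ref{comporderings} precisely so that dominance on $\Pi$ propagates to dominance on $\Gamma$ under the restriction map $r$, which is what makes the reduction in (b) go through cleanly.
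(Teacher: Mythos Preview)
Your proof is correct and follows essentially the same route as the paper's: both parts use the compact real form $\k\oplus i\p$ for (a) and the expression of weights as $\omega-\sum n_i\alpha_i$ for (b). You actually supply one detail the paper leaves implicit, namely the justification that $\alpha_i(x)\ge0$ on $C$ via $r(\alpha_i)\in\Gamma\cup\{0\}$.
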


\begin{proof}
(a)
Since $\g_1:=\k\oplus i\p\subset\g_\C$ is a compact real form of
$\g_\C$,
there is an hermitian inner product on $V$ that is invariant under
this Lie algebra, i.e.\ $\rho(y)$ is anti-self adjoint for every
$y\in\g_1$. In particular, $\rho(x)$ is self-adjoint for every
$x\in\p$.

(b)
Let $\chi_1,\dots,\chi_r\in\h^\du$ be the weights of $\rho$, with
$\chi_1=\omega$. For $x\in\a$, the eigenvalues of $\rho(x)$
are $\chi_1(x),\dots,\chi_r(x)$.
Every $\chi_i$ has the form $\omega-\sum_{i=1}^lk_i\alpha_i$ with
integer coefficients $k_i\ge0$. Since $x\in C$ we have
$\alpha_i(x)\ge0$ for each index~$i$, from which the claim is
obvious.
\end{proof}

\begin{lab}
\textsc{Proof} of Theorem \ref{polorbspekt}.
Let $x\in\p$, let $c_j=\omega_{i(j)}(x)$ be the largest eigenvalue of
$\rho^j(x)$, and write $O(x):=\{y\in\p$: $\rho^j(y)\preceq
c_j\cdot\id$ for $j=1,\dots,n\}$. Both sets $\scrO_x$ and $O(x)$ are
$K$-invariant. To prove equality $\scrO_x=O(x)$, it therefore
suffices to show $\scrO_x\cap C=O(x)\cap C$, since every $K$-orbit
meets the Weyl chamber $C$.

So let $x,\,y\in C$. See \ref{dualbas} to \ref{restrfundgew} for
notation in the following discussion. Since $\rho^j$ has highest
weight $\omega_{i(j)}$, the largest eigenvalue of $\rho^j(y)$ is
$\omega_{i(j)}(y)$ (Lemma \ref{ewprelmaxev}(b)). Hence $y\in O(x)$ if
and only if $\omega_{i(j)}(y)\le\omega_{i(j)}(x)=c_j$ for
$j=1,\dots,n$. By Lemma \ref{restrfundgew}, the restriction
$r(\omega_{i(j)})\in\a^\du$ is a positive scalar multiple of $\mu_j$
(recall that $\omega_{i(j)}=\frac12|\alpha_{i(j)}|^2\lambda_{i(j)}$,
$j=1,\dots,n$).
So $y\in O(x)$ if and only if $\mu_j(y)\le\mu_j(x)$ for
$j=1,\dots,n$. By Proposition \ref{kost33}(b) this is equivalent to
$y\in P_x$. On the other hand, $y\in P_x$ is equivalent to
$y\in\scrO_x$ by Corollary \ref{cor2kost}.
\qed
\end{lab}

\begin{example}\label{sonn}%
We illustrate the statement of Theorem \ref{polorbspekt}. Let
$n\ge3$,
and consider the action of $(g,h)\in K=SO(n)\times SO(n)$ on
$x\in M_n(\R)$ by $gxh^t$. This is a polar representation of $K$
that arises from the split real form of $D_n$, i.e.\ from the simple
Lie algebra
\begin{equation}\label{natrep}%
\g\>=\>so(n,n)\>=\>\bigl\{x\in M_{2n}(\R)\colon jx+x^tj=0\bigr\},
\quad j=\begin{pmatrix}I_n&0\\0&-I_n\end{pmatrix}.
\end{equation}
Note that $\g$ consists of all block matrices
\begin{equation}\label{xuvw}%
x\>=\>\begin{pmatrix}u&w\\w^t&v\end{pmatrix}
\end{equation}
with $u,\,v,\,w\in M_n(\R)$ and $u,\,v$ skew-symmetric, and
$\p\subset\g$ is the subspace of all symmetric such matrices, i.e.\
with $u=v=0$.
As maximal commutative subspace of $\p$ we take the space $\a$ of
all matrices \eqref{xuvw} with $u=v=0$ and $w=\diag(x_1,\dots,x_n)$
diagonal. Denote such a matrix by $x=(x_1,\dots,x_n)$. The simple
roots $\beta_i=\alpha_i$ act on $x$ as $\alpha_i(x)=x_i-x_{i+1}$
($1\le i<n$) and $\alpha_n(x)=x_{n-1}+x_n$. Hence the Weyl chamber
$C$ consists of all $x\in\a$ with $x_1\ge\cdots\ge x_{n-1}\ge|x_n|$.
The fundamental weights $\mu_i=\lambda_i$ are $\lambda_i(x)=
x_1+\cdots+x_i$ ($i\le n-2$) and
$$\lambda_{n-1}(x)\>=\>\frac12\bigl(x_1+\cdots+x_{n-1}-x_n\bigr),
\quad\lambda_n(x)\>=\>\frac12\bigl(x_1+\cdots+x_{n-1}+x_n\bigr).$$
By Lemma \ref{kost33}, the momentum polytope for $x\in C$ is
described by $P_x\cap C=\{y\in C$: $\lambda_i(y)\le\lambda_i(x)$,
$i=1,\dots,n\}$.
The first fundamental representation $\rho_1$ of $\g$ is the
natural representation \eqref{natrep}, the higher ones are the
exterior powers $\rho_i=\E^i\rho_1$ ($1\le i\le n-2$). Moreover,
$\rho_{n-1}$ and $\rho_n$ are the two half-spin representations.
So $\dim(\rho_i)=\choose{2n}i$ for $i\le n-2$, and
$\dim(\rho_{n-1})=\dim(\rho_n)=2^{n-1}$. Expressing the $\rho_i$ by
matrices one arrives at explicit spectrahedral representations of
the $K$-orbitopes $\scrO_x$, for $x\in M_n(\R)$. These
representations are closely related to \cite[Theorem 4.7]{sss}, where
the group acting is $O(n)\times O(n)$ instead of our~$K$.
\end{example}

\begin{rem}\label{sonrem}%
For general $x\in\p$, none of the $n$ linear matrix inequalities
describing $\scrO_x$ in Theorem \ref{polorbspekt} can be left out.
For special $x$ this may be different. We illustrate this remark
with just one example, deferring a detailed discussion to a later
occasion.

Consider again the action of $K=SO(n)\times SO(n)$ on $M_n(\R)$, as
in \ref{sonn}, and take $x=I_n\in M_n(\R)$, the identity matrix, so
$x=(1,\dots,1)$ in notation of \ref{sonn}. The orbitope is
$\scrO_x=\conv SO(n)$, the convex hull of the group $SO(n)$. Due to
the special choice of $x$, the description of the momentum polytope
simplifies.
For $y\in C$, the condition $y_1\le1$ implies $\sum_{i=1}^ky_i\le k$
for every $k=1,\dots,n$.
So $P_x\cap C$ is already described by the two inequalities
$\lambda_1(y)\le1$ and $\lambda_{n-1}(y)\le\frac{n-2}2$.
We conclude that $\scrO_x=\conv SO(n)$ satisfies
$$\conv SO(n)\ =\ \Bigl\{y\in M_n(\R)\colon
\begin{pmatrix}0&y\\y^t&0\end{pmatrix}\preceq I,\
\rho_{n-1}\begin{pmatrix}0&y\\y^t&0\end{pmatrix}\preceq
\frac{n-2}2I\Bigr\}$$
since both sets agree when intersected with $C$. This recovers one
of the main results of Saunderson, Parrilo and Willsky
\cite[Theorem 1.3]{spw}. (In the notation of \emph{loc.\,cit.},
given a matrix $y=(y_{ij})\in M_n(\R)$, the $2^{n-1}\times 2^{n-1}$
matrix $\sum_{i,j=1}^ny_{ij}A^{(ij)}$ constructed there corresponds
to the endomorphism $\rho_n\begin{pmatrix}0&2y\\2y^t&0
\end{pmatrix}$. The extra factor $2$ accounts for the apparent
difference between their result and ours.) See \ref{spwrem} below for
a spectrahedral representation of the polar convex set $SO(n)^o$.
\end{rem}


\section{Face correspondence}

\begin{lab}\label{facecorrdfn}%
As before let $\g=\k\oplus\p$ be a Cartan decomposition of a
semisimple real Lie algebra $\g$. For general setup and notation see
Section \ref{lienot}. We continue to denote the orthogonal projection
$\p\to\a$ by~$\pi$. Let $x\in\p$, and let $P_x$ be the momentum
polytope of~$x$ (\ref{mompolytop}). If $Q$ is any face of $P_x$, then
$F_Q:=\scrO_x\cap\pi^{-1}(Q)$ is a face of $\scrO_x$.
For any $w\in W$ there exists $g\in N_K(\a)$ with $w=gZ_K(\a)$. The
projection $\pi\colon\p\to\a$ is easily seen to commute with the
action of $N_K(\a)$, and therefore $F_{wQ}=gF_Q$ holds.
Hence the assignment $Q\mapsto F_Q$ induces a map from $W$-orbits of
faces of $P_x$ to $K$-orbits of faces of $\scrO_x$.

The following theorem asserts, in particular, that this map is
bijective. This fact was originally proved by Biliotti, Ghigi and
Heinzner \cite[Theorem~1.1]{bgh1}. We give a new proof that we
think is considerably easier. Note however that \cite{bgh1} proves a
more precise result, implying in particular that the faces $F_Q$ of
$\scrO_x$ are themselves orbitopes under suitable groups.
\end{lab}

\begin{thm}\label{orbitcorr}%
Let $x\in\p$, let $F$ be a face of the orbitope $\scrO_x$.
\begin{itemize}
\item[(a)]
There exists a face $Q$ of $P_x$ and an element $g\in K$ such that
$F=gF_Q$.
\item[(b)]
If $Q'$ is another face of $P_x$ with $F\subset g'F_{Q'}$ for some
$g'\in K$, then there exists $w\in W$ such that $wQ\subset Q'$.
\end{itemize}
In particular, $Q\mapsto F_Q$ induces a bijective correspondence
between $W$-orbits of faces of $P_x$ and $K$-orbits of faces of
$\scrO_x$, compatible with inclusion of faces.
\end{thm}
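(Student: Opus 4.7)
The plan is to exploit the spectrahedral structure given by Theorem~\ref{polorbspekt}: every face of $\scrO_x$ is exposed. Using the $K$-invariant Killing form to identify $\p^\du\cong\p$, any face $F$ of $\scrO_x$ arises as $F=\{y\in\scrO_x:\bil{z}{y}=c\}$ with $z\in\p$ and $c=\max_{\scrO_x}\bil{z}{\cdot}$. Since every $K$-orbit in $\p$ meets the Weyl chamber, I would write $z=gz_0$ with $z_0\in C\subset\a$ and $g\in K$, so that $K$-invariance of the Killing form gives $F=g\cdot\{y'\in\scrO_x:\bil{z_0}{y'}=c\}$.

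For part~(a), once $z_0\in\a$, orthogonality of $\a$ and $\a^\bot$ in the Killing form gives $\bil{z_0}{y'}=\bil{z_0}{\pi(y')}$, and Corollary~\ref{cor2kost} yields $\pi(\scrO_x)=P_x$. Taking $Q:=\{u\in P_x:\bil{z_0}{u}=c\}$, which is a face of $P_x$, the inner set above equals $\scrO_x\cap\pi^{-1}(Q)=F_Q$, so $F=gF_Q$.

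For part~(b), after applying~(a), the hypothesis reduces to: if $hF_Q\subset F_{Q'}$ for some $h\in K$, then $wQ\subset Q'$ for some $w\in W$. I would pick $z'\in\a$ exposing $Q'$ with maximum $c'=\max_{P_x}\bil{z'}{\cdot}$; by Kostant, the extension $\bil{z'}{\cdot}$ then exposes $F_{Q'}$ on $\scrO_x$ with the same maximum~$c'$. The inclusion forces $\bil{z'}{hy}=\bil{h^{-1}z'}{y}=c'$ for all $y\in F_Q$, hence $\bil{z_a}{u}=c'$ for all $u\in Q=F_Q\cap\a$, where $z_a:=\pi(h^{-1}z')\in\a$. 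Now apply Kostant's theorem~\ref{kostant} to $z'$: $z_a\in\pi(Kz')=\conv(Wz')$, so $z_a=\sum_i\lambda_iw_iz'$ with $w_i\in W$, $\lambda_i>0$, $\sum_i\lambda_i=1$. For $u\in Q$ this gives $c'=\sum_i\lambda_i\bil{z'}{w_i^{-1}u}$, each summand at most $c'$ by $W$-invariance of $P_x$, so equality throughout forces every $w_i^{-1}u\in Q'$, and $w:=w_i^{-1}$ satisfies $wQ\subset Q'$.

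The main obstacle is that $\pi$ does not intertwine the $K$-action, so one cannot directly push $Q$ through~$h$. The trick is to move the $K$-action onto the exposing functional, where Kostant's theorem decomposes $\pi(h^{-1}z')$ as a convex combination of $W$-translates of~$z'$; an averaging argument based on the maximality condition then produces the desired $w\in W$. Bijectivity of $Q\mapsto F_Q$ on $W$-/$K$-orbits, and compatibility with inclusion, follow immediately from (a) and~(b).
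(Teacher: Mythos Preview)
Your argument for part~(a) is correct and coincides with the paper's: exposedness via the spectrahedral representation, followed by conjugating the exposing functional into~$\a$.

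For part~(b) your proof is also correct, but it proceeds by a genuinely different route. The paper works on the \emph{primal} side: it picks a point $y\in\relint(Q)$, pushes it via $g$ into $F_{Q'}$, projects to get $\pi(gy)\in Q'\cap P_y$, and then invokes a short separate lemma (Lemma~\ref{keylem}: if a face $Q'$ of $P_x$ meets $P_y$ then some $wy$ lies in~$Q'$) to extract~$w$. You instead work on the \emph{dual} side: you move the $K$-action to the exposing functional $z'$, project $h^{-1}z'$ to $\a$, and apply Kostant's theorem to~$z'$ rather than to a point of~$Q$, obtaining $z_a\in\conv(Wz')$; the extremality of $c'$ then forces each term of the convex combination to be tight, yielding $w_i^{-1}Q\subset Q'$. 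Both arguments ultimately rest on Kostant plus a supporting-hyperplane/extremality step, but yours bypasses the auxiliary lemma entirely and packages everything into a single averaging argument. The paper's version has the advantage of isolating a reusable polytopal fact; your version is more self-contained and makes the role of Kostant's theorem for the \emph{functional} (not just for points of the orbitope) explicit.
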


For the proof observe the following lemma:

\begin{lem}\label{keylem}%
Let $x\in\p$, let $Q$ be a face of $P_x$, and let $y\in P_x$ with
$P_y\cap Q\ne\emptyset$. Then $wy\in Q$ for some $w\in W$.
\end{lem}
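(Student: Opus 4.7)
The argument is essentially an application of the defining property of faces, once one notices that $P_x$ is $W$-invariant. My plan is as follows.

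First, I would pick an arbitrary point $z \in P_y \cap Q$. Since $z \in P_y = \conv(Wy)$, I can write it as a positive convex combination
\[
z \>=\> \sum_{i=1}^r t_i\,(w_i y)
\]
with $w_i \in W$, $t_i > 0$ and $\sum_i t_i = 1$. The key observation is that each summand $w_i y$ lies in $P_x$: indeed, $y \in P_x = \conv(Wx)$, and $P_x$ is $W$-invariant since $W$ permutes the vertex set $Wx$.

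Second, I would invoke the standard face property: if $Q$ is a face of the convex set $P_x$ and a point of $Q$ can be written as a convex combination with strictly positive coefficients of points of $P_x$, then all those points lie in $Q$. Applying this to $z \in Q$ and the combination above forces $w_i y \in Q$ for every $i$. Choosing $w := w_1$ (or any of the $w_i$) then gives the desired element of $W$ with $wy \in Q$.

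I do not foresee any obstacle: the only ingredients are the $W$-invariance of $P_x$ (immediate from $P_x = \conv(Wx)$) and the elementary fact that faces are stable under taking points of positive convex combinations. No appeal to Kostant's theorem or to the restricted root system structure is needed for this lemma.
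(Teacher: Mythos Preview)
Your proof is correct. Both approaches rest on the same two observations---that $P_x$ is $W$-invariant (so $w_iy\in P_x$) and that $Q$ is a face of $P_x$---but they exploit the face property in slightly different ways. The paper passes through a supporting hyperplane: since $P_x$ is a polytope, any proper face $Q$ is cut out by a hyperplane $H$; then $H$ also supports $P_y\subset P_x$, and an extreme point of $P_y$ lying in $H\cap P_y$ must be of the form $wy$. Your argument bypasses hyperplanes and extreme points entirely by writing a point of $P_y\cap Q$ as a positive convex combination of points $w_iy\in P_x$ and invoking the defining property of a face directly. Your route is marginally more self-contained (it does not use that faces of polytopes are exposed), while the paper's version makes the geometry a bit more visible; in either case the lemma is immediate once $W$-invariance of $P_x$ is noted.
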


\begin{proof}
We can assume $Q\ne P_x$,
so there is a supporting hyperplane $H\subset\a$ of $P_x$ with
$Q=H\cap P_x$. Since $P_y\subset P_x$ and $H\cap P_y$ is not empty,
the hyperplane $H$ is a supporting hyperplane of $P_y$ as well.
In particular, $Q'=H\cap P_y$ is a face of $P_y$, and therefore
contains an extreme point $y'$ of $P_y$.
Thus $y'\in Q'\subset H\cap P_x=Q$, and $y'=wy$ for some $w\in W$
since $P_y=\conv(Wy)$.
\end{proof}

\begin{lab}
\textsc{Proof} of Theorem \ref{orbitcorr}.
\smallskip

(a)
Let $F$ be a face of $\scrO_x$. By Theorem \ref{connpolorbsp},
$\scrO_x$ is a spectrahedron, so all faces are exposed. Hence there
exist $z\in\p$ and $c\in\R$ such that $H=\{y\in\p\colon\bil yz=c\}$
is a supporting hyperplane of $\scrO_x$ with $H\cap\scrO_x=F$. Upon
replacing $F$ with $gF$ for some $g\in K$ we can assume $z\in\a$,
since $z$ is $K$-conjugate to an element of $\a$.
Then $H\cap\a$ is a supporting hyperplane of $P_x$, and so
$Q:=H\cap P_x$ is a face of $P_x$. Clearly $F=F_Q$.
\smallskip

(b)
By (a) it suffices to show: If $Q,\,Q'$ are faces of $P_x$, and if
$gF_Q\subset F_{Q'}$ for some $g\in K$, then there exists
$w\in W$ with $wQ\subset Q'$.
Let $y\in\relint(Q)$. Since $Q\subset F_Q$ we have $gy\in F_{Q'}$,
and therefore $\pi(gy)\in Q'$.
On the other hand, $\pi(gy)\in\pi(Ky)=P_y$. So Lemma \ref{keylem}
applies and shows $wy\in Q'$ for some $w\in W$.
Since $y\in\relint(Q)$, this implies $wQ\subset Q'$.

In particular, if $Q,\,Q'$ are faces of $P_x$ for which $F_Q$ and
$F_{Q'}$ are $K$-conjugate, then $Q$ and $Q'$ are $W$-conjugate.
\end{lab}

Recall that a face $Q$ of a polytope $P$ is called a \emph{facet} if
$\dim(Q)=\dim(P)-1$.

\begin{cor}\label{maxpropfac}%
The maximal proper faces of $\scrO_x$ are precisely the
$K$-conjugates of the faces $F_Q$, where $Q$ is a facet of~$P_x$.
\qed
\end{cor}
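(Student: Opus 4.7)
The plan is to derive the corollary directly from the inclusion-compatible bijection in Theorem \ref{orbitcorr} between $W$-orbits of faces of $P_x$ and $K$-orbits of faces of $\scrO_x$. Under such a correspondence, maximal proper elements on the two sides must agree, and the maximal proper faces of the polytope $P_x$ are precisely its facets. What remains is to supply the strict-inclusion arguments.

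First I would record two easy preliminaries. Since $\pi(\scrO_x)=P_x$ by Corollary \ref{cor2kost}, one has $F_{P_x}=\scrO_x\cap\pi^{-1}(P_x)=\scrO_x$. Moreover $F_Q\cap\a=Q$ for every face $Q$ of $P_x$ (using $\scrO_x\cap\a=P_x$ from the same corollary), so the map $Q\mapsto F_Q$ is strictly inclusion-preserving and sends proper faces of $P_x$ to proper faces of $\scrO_x$.

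For one direction, let $F$ be a maximal proper face of $\scrO_x$. By Theorem \ref{orbitcorr}(a) write $F=gF_Q$ for some face $Q$ of $P_x$; by the first preliminary $Q\ne P_x$. If $Q$ were not a facet, one could find $Q\subsetneq Q'\subsetneq P_x$, and then the second preliminary would yield proper faces $F=gF_Q\subsetneq gF_{Q'}\subsetneq\scrO_x$, contradicting maximality of $F$. For the reverse direction, let $Q$ be a facet of $P_x$ and suppose $F_Q\subsetneq F$ for some proper face $F=gF_{Q'}$ of $\scrO_x$. Theorem \ref{orbitcorr}(b) yields $w\in W$ with $wQ\subset Q'$; since $wQ$ is itself a facet and $Q'$ is proper, equality $wQ=Q'$ is forced, so $F_{Q'}=F_{wQ}$ is $K$-conjugate to $F_Q$. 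But a proper inclusion between two equidimensional faces of $\scrO_x$ is impossible, since a proper face has strictly smaller dimension.

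The subtle point — where I would pay closest attention — is the reverse direction: Theorem \ref{orbitcorr}(b) only delivers a Weyl-conjugate inclusion $wQ\subset Q'$, not equality $Q=Q'$, so maximality of $F_Q$ has to be forced by combining the facet property of $wQ$ with the fact that the $K$-action on $\scrO_x$ preserves dimensions of faces.
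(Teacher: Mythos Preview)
Your argument is correct. The paper gives no proof at all (the \qed immediately follows the statement), treating the corollary as an immediate consequence of the inclusion-compatible bijection in Theorem~\ref{orbitcorr}; you have simply spelled out the details of that deduction, and done so accurately.
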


\begin{lab}\label{polarset}%
We apply this result to the study of the coorbitope $\scrO_x^o$.
First recall the definition of the polar of a convex set. Let $V$
be a real vector space, $\dim(V)<\infty$.
For any set $M\subset V$ let $M^o=\{l\in V^\du\colon\all x\in M$
$l(x)\le1\}$, the \emph{polar set} of~$M$. Usually a euclidean
inner product $\bil\hvar\hvar$ on $V$ will be fixed, then we identify
$M^o$ with the set $\{y\in V\colon\all x\in M$ $\bil xy\le1\}$.
If $M$ is compact and $0$ is an interior point of $M$, the same holds
for~$M^o$.

If the compact group $K$ acts on $V$ and $x\in V$, the polar set of
the orbitope $\scrO_x=\conv(Kx)$ is called the \emph{coorbitope}
of~$x$ \cite{sss}.
Clearly the group $K$ acts on $\scrO_x^o$, but in general
$\scrO_x^o$ won't be a $K$-orbitope. Below (\ref{facetsconj} and
\ref{biorbitop}) we'll identify those cases when this happens.
\end{lab}

\begin{lab}\label{fulldim}%
For any irreducible abstract root system $(V,\Sigma)$ (possibly
non-reduced), the Weyl group $W$ acts irreducibly on~$V$.
For any $0\ne x\in V$, the polytope $P_x=\conv(Wx)$ therefore
contains an open neighborhood of the origin. If $(V,\Sigma)$ is not
necessarily irreducible and $x\in V$, it follows that the polytope
$P_x$ is full-dimensional if and only if every irreducible component
of $\Sigma$ contains a root $\alpha$ with $\alpha(x)\ne0$. Moreover
in this case, $0$ is an interior point of $P_x$.

Let $K\to SO(V)$ be a polar representation, and let $x\in V$. The
previous discussion implies that when $V$ is irreducible, the origin
is an interior point of $\scrO_x$ as soon as $x\ne0$.
When $V$ is an arbitrary polar representation of $K$, let
$V=\bigoplus V_i$ be the decomposition into irreducible $K$-modules
as in \ref{dirprod}, and let $x=\sum_ix_i\in V$ with $x_i\in V_i$.
Then $\scrO_x=\scrO_{x_1}\times\cdots\times\scrO_{x_n}$ by
\ref{dirprod}. Therefore $\scrO_x$ is full-dimensional in $V$ iff $0$
is an interior point of $\scrO_x$, and both are equivalent to
$x_i\ne0$ for each index~$i$. It is also equivalent that
$\pi(Kx)=\scrO_x\cap\a$ (\ref{cor2kost}) is full-dimensional in $\a$.
\end{lab}

When studying the orbitope $\scrO_x$, we can obviously assume that
$\scrO_x$ is full-dimen\-sional (or equivalently, $0$ is an interior
point of $\scrO_x$), by the previous discussion.

\begin{prop}\label{oxoorbits}%
Let $x\in\p$ such that $\scrO_x$ is full-dimensional, and let
$\scrO_x^o\subset\p$ be the associated coorbitope. The $K$-orbits of
extreme points of $\scrO_x^o$ are in natural bijective correspondence
with the $W$-orbits of facets of the polytope $P_x$. In particular,
$\scrO_x^o$ is the convex hull of finitely many $K$-orbits in~$\p$.
\end{prop}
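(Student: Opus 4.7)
My plan is to first reduce the problem to the polar polytope $P_x^o \subset \a$ via Kostant's theorem, then identify the extreme points of $\scrO_x^o$ directly by a norm-preservation argument. The starting point is the identity $\scrO_x^o \cap \a = P_x^o$, where $P_x^o \subset \a$ denotes the polar of $P_x$ taken inside $\a$. This follows because $\bil y z = \bil{\pi(y)}z$ for $z \in \a$ and $y \in \p$, together with $\pi(\scrO_x) = P_x$ (Corollary~\ref{cor2kost}). Combined with $K$-invariance of $\scrO_x^o$ and the fact that every $K$-orbit meets $\a$, this yields $\scrO_x^o = K\cdot P_x^o$. Full-dimensionality of $\scrO_x$ passes to $\scrO_x^o$ (and hence to $P_x^o$), so $P_x^o$ is a full-dimensional polytope in $\a$ with $0$ in its interior. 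Standard polytope polarity then identifies its vertices with the facets $Q$ of $P_x$: each facet $Q$ yields a unique vertex $z_Q \in \a$ satisfying $\bil y{z_Q} = 1$ on $Q$ and $\bil y{z_Q} \le 1$ on $P_x$. Hence $P_x^o = \conv(\{z_Q\}_Q)$, and by $K$-equivariance
\[
\scrO_x^o\>=\>\conv\Bigl(\bigcup_Q K\cdot z_Q\Bigr),
\]
which gives the last sentence of the proposition.

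The main step is to show that each $z_Q$ is an extreme point of $\scrO_x^o$; by $K$-equivariance, the whole orbit $K z_Q$ then consists of extreme points, and conversely any extreme point of $\scrO_x^o$ lies in the compact set $\bigcup_Q K z_Q$ by Krein--Milman. Suppose $z_Q = \sum_i t_i g_i z_{Q_i}$ is a convex combination with $t_i > 0$, $\sum_i t_i = 1$, $g_i \in K$, and $Q_i$ a facet of $P_x$. Applying $\pi$ and Kostant's theorem, each $\pi(g_i z_{Q_i})$ lies in $P_{z_{Q_i}} = \conv(W z_{Q_i}) \subset P_x^o$, so the convex combination $z_Q = \sum_i t_i \pi(g_i z_{Q_i})$ inside $P_x^o$ is forced to be trivial at the vertex $z_Q$, giving $\pi(g_i z_{Q_i}) = z_Q$ for each $i$. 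Since $z_Q$ is also extreme in the subpolytope $\conv(W z_{Q_i})$, we get $z_Q \in W z_{Q_i}$ and in particular $\|g_i z_{Q_i}\| = \|z_{Q_i}\| = \|z_Q\|$. The orthogonal projection $\pi$ is norm-decreasing, with $\|\pi(v)\| = \|v\|$ only when $v \in \a$, so $g_i z_{Q_i} = \pi(g_i z_{Q_i}) = z_Q$, and the decomposition collapses.

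Finally, for the bijection, distinct $W$-orbits of facets yield distinct $K$-orbits of extreme points: if $z_Q$ and $z_{Q'}$ were $K$-conjugate, Corollary~\ref{cor3kost} would make them $W$-conjugate, which by polytope polarity transports to $W$-conjugacy of $Q$ and $Q'$. I expect the delicate point to be the extremality argument in the middle paragraph; once the vertex property of $z_Q$ in $P_x^o$ combines with Kostant's theorem to pin down $\pi(g_i z_{Q_i}) = z_Q$ and to force $z_Q$ and $z_{Q_i}$ into a common $W$-orbit, the isometric nature of the $K$-action closes things via a Pythagoras argument.
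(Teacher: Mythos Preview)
Your proof is correct and takes a genuinely different route from the paper's.

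The paper argues via the face correspondence Theorem~\ref{orbitcorr}: an exposed extreme point $z$ of $\scrO_x^o$ gives a maximal face $\wh z$ of $\scrO_x$, which by Theorem~\ref{orbitcorr} and Corollary~\ref{maxpropfac} is $K$-conjugate to some $F_Q$ for a facet $Q$ of $P_x$; Straszewicz's theorem then upgrades ``exposed extreme'' to ``extreme''. Crucially, Theorem~\ref{orbitcorr} itself invokes the spectrahedral representation (Theorem~\ref{connpolorbsp}) to know that all faces of $\scrO_x$ are exposed. Your argument bypasses both ingredients: you go straight to $\scrO_x^o\cap\a=P_x^o$ via Kostant, read off the generators $z_Q$ from polytope polarity, and then prove extremality of each $z_Q$ in $\scrO_x^o$ by the Pythagoras/norm argument, using only that $\pi$ is an orthogonal projection and that $K$ and $W$ act by isometries. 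The one step worth making explicit is the standard reduction you use implicitly: since $\scrO_x^o=\conv\bigl(\bigcup_Q Kz_Q\bigr)$ with $\bigcup_Q Kz_Q$ compact, it suffices to test extremality of $z_Q$ against convex combinations of points from $\bigcup_Q Kz_Q$ (Carath\'eodory plus the definition of extreme point). What your approach buys is independence from the spectrahedral machinery and from Straszewicz; what the paper's approach buys is a uniform treatment of all faces of $\scrO_x$, not just the maximal ones, within the same framework.
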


\begin{proof}
First recall the following general and easy fact (see
\cite[2.1.4]{sn}, for example). Let $\scrO\subset\R^n$ be any compact
convex body which contains a neighborhood of~$0$, and let
$\scrO^o\subset\R^n$ be the convex body polar to $\scrO$. For any
face $F$ of $\scrO$ let $\wh F=\{y\in\scrO^o\colon\all x\in F$
$\bil xy=1\}$. Then $\wh F$ is an exposed face of $\scrO^o$, and
$F\mapsto\wh F$ restricts to an inclusion-reversing bijection between
exposed faces of $\scrO$ and exposed faces of $\scrO^o$, with inverse
map $G\mapsto\wh G$.

To prove the proposition we can assume $x\in C$. Let $Fac(x)$ be a
set of representatives of the $W$-orbits of facets of $P_x$. Let $z$
be an exposed extreme point of $\scrO_x^o=\{y\in\p\colon\all g\in K$
$\bil{gx}y\le1\}$, and write $G_z:=\wh z=\{y\in\scrO_x\colon
\bil yz=1\}$. By the fact just recalled, $G_z$ is a maximal face of
$\scrO_x$,
and so $G_z=gF_Q$ for some $Q\in Fac(x)$ and some $g\in K$
(\ref{orbitcorr}, \ref{maxpropfac}). It is easily checked that
$G_{hz}=hG_z$ for any $h\in K$.
If $u\in\scrO_x^o$ is another exposed extreme point, and if
$G_u=hF_Q$ for some $h\in K$, then $G_{gh^{-1}u}=gh^{-1}G_u=
G_z$, whence $gh^{-1}u=z$, so $u$ and $z$ are $K$-conjugate. This
shows that the exposed extreme points of $\scrO_x^o$ consist of
finitely many $K$-orbits, each of them corresponding to a different
$W$-orbit of facets of $P_x$. Since exposed extreme points are dense
within all extreme points (Straszewicz' theorem, e.g.\ \cite{sn}
1.4.7), we conclude that all extreme points of $\scrO_x^o$ are
exposed.

For each facet $Q$ of $P_x$ we claim conversely that $\wh{F_Q}$ is an
(exposed) extreme point of $\scrO_x^o$. Indeed, otherwise
$\wh{F_Q}$ would be a minimal exposed face of $\scrO_x^o$ of
dimension $\ge1$. But such a face cannot exist, since all extreme
points of $\scrO_x^o$ are exposed.
Altogether we have proved the bijection between $K$-orbits of
extreme points of $\scrO_x^o$ and $W$-orbits of facets of $P_x$.
\end{proof}

\begin{cor}\label{facetsconj}%
Let $x\in\p$. The coorbitope $\scrO_x^o$ is a $K$-orbitope itself
if, and only if, all facets of the polytope $P_x$ are $W$-conjugate.
\qed
\end{cor}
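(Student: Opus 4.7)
The plan is that this corollary is an essentially immediate consequence of Proposition \ref{oxoorbits}, so I would keep the argument short. Following the paragraph preceding \ref{oxoorbits}, I would first reduce to the case that $\scrO_x$ is full-dimensional. In the non-full-dimensional case, \ref{fulldim} tells us $0$ is not interior to $\scrO_x$, hence $\scrO_x^o$ is unbounded and cannot be a $K$-orbitope; using the direct product decomposition of \ref{dirprod} one also verifies that the facets of $P_x$ fail to be a single $W$-orbit, so both sides of the stated equivalence fail (or are vacuous) in that case.

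The main observation for the full-dimensional case is that a compact, $K$-invariant convex body $C\subset\p$ is a $K$-orbitope if and only if its set of extreme points is a single $K$-orbit. The forward direction is recorded in \ref{orbitopsasum}: the extreme points of $\conv(Ky)$ coincide with $Ky$. The reverse direction follows from the Krein-Milman theorem, since a compact convex set equals the convex hull of its extreme points, so if these form one orbit $Ky$ then $C=\conv(Ky)$.

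Applying this to $C=\scrO_x^o$, which is compact and $K$-invariant when $\scrO_x$ is full-dimensional, and combining with the bijection from Proposition \ref{oxoorbits} between $K$-orbits of extreme points of $\scrO_x^o$ and $W$-orbits of facets of $P_x$, the equivalence reads off directly: $\scrO_x^o$ is a $K$-orbitope iff there is exactly one $K$-orbit of extreme points iff there is exactly one $W$-orbit of facets of $P_x$, i.e.\ all facets are $W$-conjugate. I do not foresee any genuine obstacle, since all the substantive work is already contained in Proposition \ref{oxoorbits}; the only point requiring care is the clean handling of the reduction to the full-dimensional case.
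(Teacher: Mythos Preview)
Your argument in the full-dimensional case is exactly what the paper intends: the corollary is marked with a bare \qed, and the paper explicitly records (just before Proposition~\ref{oxoorbits}) that one may always assume $\scrO_x$ is full-dimensional. Your use of Krein--Milman to characterize orbitopes among compact $K$-invariant bodies, combined with the bijection of Proposition~\ref{oxoorbits}, is the intended one-line deduction.

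However, your treatment of the non-full-dimensional case contains an error. You claim that when $\scrO_x$ is not full-dimensional, the facets of $P_x$ necessarily fail to form a single $W$-orbit. This is false. Take $\g=\g_1\oplus\g_2$ with $\g_1$ simple of restricted rank~$1$, and $x=(x_1,0)$ with $x_1\ne0$. Then $P_x\subset\a=\a_1\oplus\a_2$ is a segment whose two endpoints (its facets) are exchanged by $W_1$, so all facets are $W$-conjugate; yet $\scrO_x^o$ is unbounded and hence not an orbitope. Thus the equivalence as literally stated can fail without the full-dimensionality hypothesis. The paper avoids this by the standing reduction recorded in~\ref{fulldim} and the sentence preceding Proposition~\ref{oxoorbits}; you should simply invoke that reduction rather than attempt to verify both sides fail in the degenerate case.
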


We will determine these cases explicitly in the next section, after
having discussed the facets of $P_x$ in more detail.

Note that under the equivalent conditions of \ref{facetsconj},  the
coorbitope $\scrO_x^o$ is a spectrahedron itself, by Theorem
\ref{connpolorbsp}.
In Sect.~\ref{sec:doubly} we will uncover many more cases where this
holds.


\section{Facets of the momentum polytope}

In the previous section, a close relation was established between the
faces of the orbitope $\scrO_x$ and the faces of the momentum
polytope $P_x$. The faces of the latter can be described in terms of
root data. We start by recalling this description.

\begin{lab}
Let $(V,\Sigma)$ be an abstract root system (which may be
non-reduced), fix an ordering $\le$ on $V$, and let
$\Gamma=\{\beta_1,\dots,\beta_n\}$ be the corresponding system of
simple positive roots. Let $\mu_1,\dots,\mu_n$ be the dual basis of
$\Gamma$ in $V$, so $\bil{\beta_i}{\mu_j}=\delta_{ij}$ for
$1\le i,\,j\le n$. Let $W$ be the Weyl group, and let
$C=\{x\in V\colon\bil{\beta_i}x\ge0$, $1\le i\le n\}$, the closed
Weyl chamber associated to $\Gamma$.

Let $x\in C$ be a given point and write $P_x=\conv(Wx)$.
A subset $I\subset\Gamma$ is said to be \emph{$x$-connected} if every
connected component of $I$ contains a root $\beta$ with
$\bil\beta x\ne0$. (Of course, connectedness notions refer to the
Dynkin graph.) Let $W_I$ be the subgroup of $W$ generated by the
root reflections $s_\beta$ where $\beta\in I$. The following result
is quoted from Casselman \cite[Theorem 3.1]{ca}, where it is proved
in the more general context of arbitrary finite Coxeter groups. As
Casselman remarks, the result is already implicit in much older
work of Satake \cite{sat} and Borel-Tits \cite{boti}.
A related discussion can also be found in \cite[\S6]{bgh2}
and \cite[\S4]{bgh1}.
\end{lab}

\begin{thm}\label{facmompol}%
Let $x\in C$.
The map $I\mapsto\conv(W_Ix)$ induces a bijection between the
$x$-connected subsets $I$ of\/ $\Gamma$ and the $W$-orbits of
faces of $P_x=\conv(Wx)$. For any such $I$ one has
$\dim\conv(W_Ix)=|I|$.
\end{thm}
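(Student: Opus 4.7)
\emph{Plan.} My strategy is to classify the faces of $P_x$ via their exposing linear functionals, using that $C$ is a fundamental domain for the $W$-action on $V$, and then to match each $W$-orbit of faces with a unique $x$-connected subset of $\Gamma$. Since $P_x$ is a polytope, every face is exposed: it equals $F_\phi:=\{y\in P_x\colon\bil y\phi=m_\phi\}$ with $m_\phi:=\max_{y\in P_x}\bil y\phi$, for some $\phi\in V$. Replacing $\phi$ by a suitable $W$-translate I may assume $\phi\in C$; the standard pairing inequality for two vectors in a common Weyl chamber gives $\bil{wx}\phi\le\bil x\phi$ for every $w\in W$, so $m_\phi=\bil x\phi$ and
\[ F_\phi\>=\>\conv\bigl\{wx\colon w\in W,\ \bil{wx}\phi=\bil x\phi\bigr\}. \]

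The heart of the argument is the identity, for $\phi\in C$,
\[ \bigl\{wx\colon w\in W,\ \bil{wx}\phi=\bil x\phi\bigr\}\>=\>W_{J(\phi)}x,\qquad J(\phi):=\{\beta\in\Gamma\colon\bil\beta\phi=0\}. \]
The inclusion ``$\supset$'' is immediate since $W_{J(\phi)}$ fixes $\phi$ pointwise. For the converse, suppose $wx$ achieves the maximum but $wx\ne x$. Since $x$ is the unique orbit representative in $C$, there is $\beta\in\Gamma$ with $\bil{wx}\beta<0$, and
\[ \bil{s_\beta wx}\phi\>=\>\bil{wx}\phi-\tfrac{2\bil{wx}\beta}{|\beta|^2}\bil\beta\phi. \]
If $\beta\notin J(\phi)$ this would strictly exceed $\bil{wx}\phi$, contradicting maximality; so $\beta\in J(\phi)$, and $s_\beta wx$ is again a maximizer that is strictly closer to $C$. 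Iterating walks $wx$ back to $x$ via simple reflections drawn from $J(\phi)$, yielding $wx\in W_{J(\phi)}x$. Hence $F_\phi=\conv(W_{J(\phi)}x)$.

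Surjectivity of $I\mapsto\conv(W_Ix)$ onto $W$-orbits of faces is then immediate: for any $I\subset\Gamma$ choose $\phi$ in the relative interior of $\{\phi\in C\colon\bil\beta\phi=0\iff\beta\in I\}$, so that $J(\phi)=I$ and $\conv(W_Ix)=F_\phi$ is a face. If a connected component $I_0\subset I$ satisfies $\bil\beta x=0$ for all $\beta\in I_0$, then $W_{I_0}$ fixes $x$, hence $\conv(W_Ix)=\conv(W_{I\setminus I_0}x)$; removing all such components reduces $I$ to an $x$-connected subset representing the same $W$-orbit of faces. For the dimension, decompose $x=x'+x''$ with $x'\in\spn(I)$ and $x''\in\spn(I)^\bot$; then $W_I$ fixes $x''$ and acts on $\spn(I)$ as the Weyl group of the root subsystem $\Sigma\cap\spn(I)$, whose irreducible components correspond to the connected components of $I$. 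For $x$-connected $I$, $x'$ has nonzero projection onto each irreducible factor, and the standard fact that the Weyl orbit of such a vector spans its ambient space yields $\dim\conv(W_Ix)=|I|$. Injectivity on $x$-connected subsets then follows from the strict fundamental-domain property of $C$: the open face of $C$ defined by $J(\phi)=I$ is, for $x$-connected $I$, the unique minimal stratum of exposing functionals in $C$, so distinct $x$-connected $I,\,I'$ yield distinct $W$-orbits of faces.

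The main technical obstacle is the length-reduction step identifying $\{wx\colon\bil{wx}\phi=\bil x\phi\}$ with $W_{J(\phi)}x$: the inclusion ``$\supset$'' is trivial, but the converse requires the combinatorial structure of $W$ as a reflection group via the $s_\beta$-walk argument, and this is the only step that goes decisively beyond general convex geometry. The dimension and injectivity claims then follow from standard facts about parabolic subgroups of Coxeter groups and the fundamental-domain property of the Weyl chamber.
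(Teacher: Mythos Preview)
The paper does not give its own proof of this theorem; it simply quotes the result from Casselman \cite[Theorem~3.1]{ca}. So there is no paper proof to compare against, and I evaluate your argument on its own merits.

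Your overall strategy is the standard one and is mostly sound. The identification $F_\phi=\conv(W_{J(\phi)}x)$ via the $s_\beta$-walk is correct; you should however make the termination explicit (e.g.\ the count $|\{\alpha\in\Sigma_\plus:\bil{wx}\alpha<0\}|$ strictly decreases, since $s_\beta$ sends $\beta$ to $-\beta$ and permutes the remaining positive roots). Surjectivity, the reduction to $x$-connected subsets, and the dimension computation are fine.

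The genuine gap is your injectivity argument. Your sentence about ``the unique minimal stratum of exposing functionals in $C$'' only shows that the \emph{face} $\conv(W_Ix)$ determines~$I$; it does not show that the $W$-\emph{orbit} of that face determines~$I$. Two faces $\conv(W_Ix)$ and $\conv(W_{I'}x)$ both contain the vertex~$x$, and in general several $W$-conjugate faces of $P_x$ contain~$x$ simultaneously, so you cannot conclude $I=I'$ from your characterisation. One clean way to close the gap: let $b_I$ be the barycenter of $\conv(W_Ix)$, which equals the orthogonal projection of $x$ onto $\spn(I)^\bot$. For $x$-connected~$I$ one has $b_I\in C$: writing $x-b_I=\sum_{\gamma\in I}c_\gamma\gamma$, the positivity of the inverse Cartan matrix on each connected component of $I$ forces all $c_\gamma>0$, and then $\bil\beta{b_I}=\bil\beta x-\sum_\gamma c_\gamma\bil\beta\gamma\ge\bil\beta x\ge0$ for every $\beta\in\Gamma\setminus I$ (distinct simple roots have nonpositive inner product). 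Now if $w\cdot\conv(W_Ix)=\conv(W_{I'}x)$ then $wb_I=b_{I'}$, and the fundamental-domain property of $C$ forces $b_I=b_{I'}$. Finally, $I$ is recovered from $b_I$ as the $x$-connected core of $\{\beta\in\Gamma:\bil\beta{b_I}=0\}$, since the roots outside $I$ annihilated by $b_I$ lie in $\{\beta:\bil\beta x=0\}$ and are disconnected from~$I$. Hence $I=I'$.
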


Here we are mainly interested in the facets of $P_x$. Assume that
$P_x$ is full-dimensional in $V$, or equivalently, that every
connected component of $\Gamma$ contains a root $\beta$ with
$\bil\beta x\ne0$ (see \ref{fulldim}).
For facets the theorem gives:

\begin{cor}\label{facetsmompol}%
Let $x\in C$ such that $P_x$ is full-dimensional, and let $I(x)$
denote the set of indices $i\in\{1,\dots,n\}$ for which
$\Pi\setminus\{\beta_i\}$ is $x$-connected. For $i\in I(x)$ let
$$P_x(i)\>:=\>\{y\in P_x\colon\mu_i(y)=\mu_i(x)\}.$$
Then $P_x(i)$ is a facet of $P_x$. Conversely, every facet of
$P_x$ is $W$-conjugate to $P_x(i)$ for a unique index $i\in I(x)$.
\end{cor}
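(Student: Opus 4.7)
The plan is to apply Theorem \ref{facmompol} to classify $W$-orbits of facets, and then identify each $P_x(i)$ with the canonical representative $\conv(W_{\Gamma\setminus\{\beta_i\}}x)$ of the corresponding orbit. Since $P_x$ is full-dimensional in $\a$, its facets are precisely its $(n-1)$-dimensional faces. By Theorem \ref{facmompol}, $W$-orbits of faces of $P_x$ are in bijection with $x$-connected subsets $I\subset\Gamma$, via $I\mapsto\conv(W_Ix)$, with $\dim\conv(W_Ix)=|I|$. Hence $W$-orbits of facets correspond to $x$-connected subsets of size $n-1$, i.e.\ to the sets $\Gamma\setminus\{\beta_i\}$ with $i\in I(x)$. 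This already produces a bijection $i\mapsto[\conv(W_{\Gamma\setminus\{\beta_i\}}x)]$ between $I(x)$ and $W$-orbits of facets, and in particular gives uniqueness of $i$.

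The inclusion $\conv(W_{\Gamma\setminus\{\beta_i\}}x)\subset P_x(i)$ is straightforward. From $\bil{\mu_i}{\beta_j}=\delta_{ij}$ one reads off that every simple reflection $s_{\beta_j}$ with $j\ne i$ fixes $\mu_i$, so the parabolic subgroup $W_{\Gamma\setminus\{\beta_i\}}$ stabilises $\mu_i$ pointwise. Therefore $\mu_i(wx)=\mu_i(x)$ for every $w\in W_{\Gamma\setminus\{\beta_i\}}$, giving the desired inclusion.

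For the reverse inclusion the plan is to show that $P_x(i)$ is the face of $P_x$ exposed by the linear functional $\mu_i$, and that this exposed face is proper. Applying Proposition \ref{kost33}(a) with $y=x$ yields $x-wx\in T$ for every $w\in W$, and since $T=\{z\in\a:\mu_1(z)\ge0,\dots,\mu_n(z)\ge0\}$, this gives at once $\mu_i(wx)\le\mu_i(x)$ for all $w\in W$, i.e.\ $\mu_i$ attains its maximum $\mu_i(x)$ on $P_x$. Thus $P_x(i)$ is an exposed face of $P_x$, and because $s_{\beta_i}\mu_i\ne\mu_i$ this face is proper, of dimension $\le n-1$. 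Combined with the inclusion of the previous paragraph and the fact that $\conv(W_{\Gamma\setminus\{\beta_i\}}x)$ is itself an $(n-1)$-dimensional face of $P_x$ by Theorem \ref{facmompol}, the standard face-of-a-face argument forces equality $P_x(i)=\conv(W_{\Gamma\setminus\{\beta_i\}}x)$.

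The only step requiring genuine Lie-theoretic input beyond the quoted Theorem \ref{facmompol} is the monotonicity $\mu_i(wx)\le\mu_i(x)$, and this follows directly from Kostant's characterisation in Proposition \ref{kost33}(a) together with the definition of the cone $T$; the remainder of the argument is dimension bookkeeping.
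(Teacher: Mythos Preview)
Your argument is essentially the paper's own: show $P_x(i)$ is a proper face, show $\conv(W_{\Gamma\setminus\{\beta_i\}}x)\subset P_x(i)$ via $\mu_i$-invariance of $s_{\beta_j}$ for $j\ne i$, then conclude equality because Theorem~\ref{facmompol} makes $\conv(W_{\Gamma\setminus\{\beta_i\}}x)$ already a facet; the bijection with $I(x)$ then drops out of Theorem~\ref{facmompol}. Your explicit appeal to Proposition~\ref{kost33}(a) for the inequality $\mu_i(wx)\le\mu_i(x)$ just spells out what the paper calls ``clearly''.

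One small slip: the clause ``because $s_{\beta_i}\mu_i\ne\mu_i$ this face is proper'' does not actually do the job. What you need is $\mu_i(s_{\beta_i}x)\ne\mu_i(x)$, which computes to $\mu_i(x)-\frac{2}{|\beta_i|^2}\beta_i(x)$ and hence fails precisely when $\beta_i(x)=0$; and $\beta_i(x)=0$ is perfectly compatible with $i\in I(x)$. The correct (and shorter) justification is the one the paper gives: $P_x$ is full-dimensional, so no nonzero linear functional is constant on it, hence $P_x(i)\subsetneq P_x$.
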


\begin{proof}
Let $i\in I(x)$. Clearly, $P_x(i)$ is a face of $P_x$, and is proper
since $P_x$ is full-dimensional.
Let $W':=W_{\Pi\setminus\beta_i}=\langle s_\beta\colon\beta\in\Pi$,
$\beta\ne\beta_i\rangle$. Then $\conv(W'x)\subset P_x(i)$ holds
since $\mu_i(s_{\beta_j}(y))=\mu_i(y)$ for every $j\ne i$.
By Theorem \ref{facmompol}, $\conv(W'x)$ is a facet of $P_x$, so we
have equality. The remaining assertion follows directly from Theorem
\ref{facmompol} as well.
\end{proof}

We will also use the following (well-known) fact:

\begin{lem}\label{stripos}%
Let $x\in C$ such that $P_x$ is full-dimensional. Then $\mu_i(x)>0$
for every $i=1,\dots,n$.
\end{lem}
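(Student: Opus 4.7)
The plan is to expand $x$ in the basis $\{\mu_1,\dots,\mu_n\}$ dual to the simple roots and then exploit a classical positivity property of the associated dual Gram matrix.

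Since $\bil{\beta_i}{\mu_j}=\delta_{ij}$, writing $x=\sum_j c_j\mu_j$ forces $c_j=\beta_j(x)$, whence
$$\mu_i(x)\,=\,\bil{\mu_i}{x}\,=\,\sum_{j=1}^n\beta_j(x)\,\bil{\mu_i}{\mu_j}.$$
The hypothesis $x\in C$ makes every coefficient $\beta_j(x)$ non-negative, so it suffices to exhibit a single strictly positive summand on the right-hand side.

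The key input I would cite is the standard fact that the Gram matrix $(\bil{\beta_i}{\beta_j})$ of the simple roots is symmetric positive-definite with non-positive off-diagonal entries (of Stieltjes type) and block-decomposes according to the connected components of the Dynkin diagram of $\Gamma$. Its inverse, whose $(i,j)$-entry is exactly $\bil{\mu_i}{\mu_j}$, therefore has non-negative entries, with strict positivity precisely when $\beta_i$ and $\beta_j$ lie in the same connected component (within each irreducible block this is a classical computation, visible for instance on the tables in Bourbaki).

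Finally, the hypothesis that $P_x$ is full-dimensional translates, via \ref{fulldim}, into the statement that every connected component of $\Gamma$ contains some simple root $\beta_j$ with $\beta_j(x)>0$. For an arbitrary index $i$, choosing such a $\beta_j$ within the component of $\beta_i$ yields a summand $\beta_j(x)\bil{\mu_i}{\mu_j}>0$ in the displayed sum, forcing $\mu_i(x)>0$. The only non-routine ingredient is the positivity pattern of the inverse Gram matrix; once that is quoted, the rest is bookkeeping.
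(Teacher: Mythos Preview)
Your argument is correct and essentially identical to the paper's: both reduce the claim to the strict positivity of the entries of the inverse of the Cartan (equivalently, Gram) matrix within each irreducible component, with the full-dimensionality hypothesis guaranteeing a nonzero coefficient $\beta_j(x)$ in every component. The only cosmetic point is that $x\in\a$ while $\mu_j\in\a^\du$, so the expansion $x=\sum_j\beta_j(x)\mu_j$ tacitly uses the identification $\a\isoto\a^\du$ set up in \ref{weyl} (equivalently, $x=\sum_j\beta_j(x)h_{\mu_j}$); this is harmless.
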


\begin{proof}
It is enough to prove this in the case where the root system is
simple and $0\ne x\in C$. Since $\beta_j(x)\ge0$ for all
$j=1,\dots,n$ and $\beta_j(x)>0$ for at least one~$j$, the lemma
follows from the fact that the inverse of the Cartan matrix has
strictly positive coefficients \cite{luti}.
\end{proof}

\begin{lab}\label{polarpoints}%
Now again consider the adjoint representation of $K$ on~$\p$. We
apply Corollary \ref{facetsmompol} to the system $\Sigma$ of
restricted roots of $(\g,\a)$. In this way we are going to identify
explicitly the $K$-orbits of extreme points of the coorbitope
$\scrO_x^o$ (see Proposition \ref{oxoorbits}).

Let $x\in C\subset\a$ such that $P_x$ is full-dimensional, and let
$I(x)=\{i\in\{1,\dots,n\}\colon\Gamma\setminus\{\beta_i\}$ is
$x$-connected$\}$ (as in \ref{facetsmompol}). For every $i\in I(x)$
we have the facet $P_x(i)=\{y\in P_x\colon\mu_i(y)=\mu_i(x)\}$ of
$P_x$. For easier notation, let us
write $F_i$ instead of $F_{P_x(i)}=\{y\in\scrO_x\colon\pi(y)\in
P_x(i)\}$. By Theorem \ref{orbitcorr}, $F_i$ is a maximal proper face
of $\scrO_x$, and every maximal proper face is $K$-conjugate to $F_i$
for a unique index $i\in I(x)$ (using also \ref{facetsmompol}). Given
$i\in I(x)$, there is a unique extreme point $z_i$ of $\scrO_x^o$
that corresponds to $F_i$ under polarity, characterized by
$F_i=\wh{z_i}=\{y\in\scrO_x\colon\bil y{z_i}=1\}$ (see proof of
\ref{oxoorbits}). The points $z_i$, for $i\in I(x)$,
represent the pairwise different $K$-orbits of extreme points in
$\scrO_x^o$. Using \ref{facetsmompol} we identify these points as
follows.

Let $i\in I(x)$, let $h_{\mu_i}\in\a$ be the element satisfying
$\bil{h_{\mu_i}}y=\mu_i(y)$ for all $y\in\a$. Note that $C$ is the
cone generated by $h_{\mu_1},\dots,h_{\mu_n}$. By Lemma \ref{stripos}
we have $\mu_i(x)>0$. We claim that $z_i=h_{\mu_i}/\mu_i(x)$.

Indeed, the element $z:=h_{\mu_i}/\mu_i(x)$ satisfies
$\bil yz=\bil{\pi(y)}z=\mu_i(\pi(y))/\mu_i(x)$ for all $y\in\scrO_x$.
Since $\pi(y)\in P_x$ (\ref{cor2kost}), this shows $\bil yz\le1$,
with equality if and only if $\pi(y)\in P_x(i)$.
So we have proved:
\end{lab}

\begin{cor}\label{orbitsoxo}%
Let $x\in C$ such that $P_x$ is full-dimensional. The coorbitope
$\scrO_x^o\subset\p$ is the convex hull of the union of the
$K$-orbits of the elements $z_i=h_{\mu_i}/\mu_i(x)\in C$, for $i$
running through $I(x)$.
\qed
\end{cor}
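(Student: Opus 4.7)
The plan is to combine Proposition \ref{oxoorbits} with Corollary \ref{facetsmompol} and then to make the polarity correspondence explicit. By \ref{facetsmompol}, since $P_x$ is full-dimensional, the facets of $P_x$ split into $|I(x)|$ many $W$-orbits with representatives $P_x(i)=\{y\in P_x\colon\mu_i(y)=\mu_i(x)\}$ ($i\in I(x)$). Under the face correspondence (Theorem \ref{orbitcorr} and Corollary \ref{maxpropfac}), these in turn label the $K$-orbits of maximal proper faces of $\scrO_x$, with representatives $F_i:=F_{P_x(i)}=\scrO_x\cap\pi^{-1}(P_x(i))$. By Proposition \ref{oxoorbits}, each $F_i$ corresponds under the polarity bijection $F\mapsto\wh F$ to a unique extreme point $z_i$ of $\scrO_x^o$, characterized by $F_i=\wh{z_i}=\{y\in\scrO_x\colon\bil y{z_i}=1\}$, and these $z_i$ represent the distinct $K$-orbits of extreme points of $\scrO_x^o$. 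So everything is reduced to identifying each $z_i$ explicitly.

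My candidate is $z_i:=h_{\mu_i}/\mu_i(x)$, which is well-defined and lies in $C$ because Lemma \ref{stripos} gives $\mu_i(x)>0$. To verify $F_i=\wh{z_i}$, I would compute, for an arbitrary $y\in\scrO_x$, using $z_i\in\a$ together with the orthogonality of $\pi$,
$$\bil y{z_i}\>=\>\bil{\pi(y)}{z_i}\>=\>\frac{\mu_i(\pi(y))}{\mu_i(x)}.$$
Since $\pi(y)\in P_x$ by Corollary \ref{cor2kost}, Proposition \ref{kost33}(a) applied with $w=e$ yields $x-\pi(y)\in T$, whence $\mu_i(\pi(y))\le\mu_i(x)$. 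Therefore $\bil y{z_i}\le 1$ for every $y\in\scrO_x$ (so $z_i\in\scrO_x^o$), with equality exactly when $\pi(y)\in P_x(i)$, that is, exactly when $y\in F_i$. This gives $F_i=\wh{z_i}$, as required, and so $\scrO_x^o$ is the convex hull of the $K$-orbits of the $z_i$ as $i$ ranges over $I(x)$.

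The only substantive step is producing the candidate $z_i$ and reading off the tight inequality $\mu_i(\pi(y))\le\mu_i(x)$ directly from Kostant's description of $P_x$; the rest is bookkeeping between the $W$-action on $\a$ and the $K$-action on $\p$. I do not anticipate any real obstacle here, since all the heavy lifting — Kostant's theorem, the face correspondence, and Casselman's parametrization of facets — has already been packaged into Propositions \ref{oxoorbits} and \ref{facetsmompol}.
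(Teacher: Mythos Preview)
Your proposal is correct and follows essentially the same approach as the paper: the argument in the paper's paragraph~\ref{polarpoints} (which serves as the proof of Corollary~\ref{orbitsoxo}) likewise combines \ref{facetsmompol}, the face correspondence, and Proposition~\ref{oxoorbits}, then verifies the candidate $z_i=h_{\mu_i}/\mu_i(x)$ via the same computation $\bil y{z_i}=\mu_i(\pi(y))/\mu_i(x)\le1$. The only cosmetic difference is that you invoke Proposition~\ref{kost33}(a) explicitly to justify $\mu_i(\pi(y))\le\mu_i(x)$, whereas the paper leaves this implicit in the phrase ``since $\pi(y)\in P_x$''.
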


A particularly interesting case arises when $|I(x)|=1$, i.e.\ the
polytope $P_x$ has only one $W$-orbit of facets. By \ref{facetsconj}
it is equivalent that the coorbitope $\scrO_x^o$ is a $K$-orbitope
itself. We will say that $\scrO_x$ is a ($K$-) \emph{biorbitope} in
this case, and we can characterize it as follows:

\begin{thm}\label{biorbitop}%
Let $0\ne x\in C$ such that $P_x$ is full-dimensional. Then $\scrO_x$
is a $K$-biorbitope if, and only if, the Lie algebra $\g$ is simple,
the restricted root system $\Gamma$ is not of type $D_n$ ($n\ge4$) or
$E_n$ ($n=6,7,8$), and the following holds: There is a simple
restricted root $\beta\in\Gamma$ with $\gamma(x)=0$ for all
$\gamma\in\Gamma\setminus\{\beta\}$, and such that
$\Gamma\setminus\{\beta\}$ is connected.
\end{thm}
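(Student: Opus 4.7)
The plan is to translate the biorbitope condition into a combinatorial statement about the restricted Dynkin diagram, and then read off the answer by type. By Corollary~\ref{facetsconj}, $\scrO_x$ is a $K$-biorbitope exactly when all facets of $P_x$ lie in one $W$-orbit, which by Corollary~\ref{facetsmompol} is equivalent to $|I(x)|=1$, where
$$
I(x)\>=\>\bigl\{i\in\{1,\dots,n\}\colon\Gamma\setminus\{\beta_i\}\text{ is $x$-connected}\bigr\}.
$$
Set $S:=\{\beta\in\Gamma\colon\beta(x)\ne0\}$; full-dimensionality of $P_x$ (\ref{fulldim}) says that $S$ meets every connected component of~$\Gamma$.

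I first reduce to the case that $\g$ is simple, equivalently that $\Gamma$ is connected: the restricted root system of a simple real Lie algebra is irreducible, while a decomposition $\g=\g_1\oplus\cdots\oplus\g_k$ splits $\Gamma$ into $\Gamma_1\sqcup\cdots\sqcup\Gamma_k$. If $k\ge2$ and $\beta_i\in\Gamma_j$, then every other factor $\Gamma_{j'}$ stays intact inside $\Gamma\setminus\{\beta_i\}$ and meets~$S$, so $i\in I(x)$ is equivalent to the analogous condition inside $\Gamma_j$. Hence $I(x)$ decomposes as a disjoint union over $j$, each summand nonempty (visible from the connected analysis below), forcing $|I(x)|\ge k\ge2$.

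Now assume $\Gamma$ connected. Since Dynkin diagrams are trees, removing a leaf preserves connectedness and removing any non-leaf breaks it. The decisive observation is that every leaf $\beta_j\notin S$ belongs to $I(x)$, because $\Gamma\setminus\{\beta_j\}$ is connected and still contains~$S\ne\varnothing$. So $|I(x)|=1$ forces at most one leaf outside~$S$. I then dispose of the remaining subcases. If $|S|\ge2$, every leaf $\beta_k\in S$ also lies in $I(x)$ (by connectedness of $\Gamma\setminus\{\beta_k\}$ and $S\setminus\{\beta_k\}\ne\varnothing$); since any connected Dynkin diagram on $n\ge2$ vertices has at least two leaves, $|I(x)|\ge2$. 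If $S=\{\beta\}$ with $\beta$ a non-leaf, then $\Gamma\setminus\{\beta\}$ is disconnected and contains no root of~$S$, so a direct count yields $I(x)=\{j\colon\beta_j\text{ is a leaf}\}$, again of size $\ge2$. The only surviving possibility is $S=\{\beta\}$ with $\beta$ a leaf---equivalently, $\Gamma\setminus\{\beta\}$ connected---in which case $I(x)=\{j\colon\beta_j\text{ is a leaf},\;\beta_j\ne\beta\}$, of size $\#(\text{leaves})-1$.

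Combining everything, $|I(x)|=1$ holds (for $n\ge2$) exactly when $\Gamma$ has exactly two leaves and $S=\{\beta\}$ for one of them; the rank-one case fits the description vacuously. A connected Dynkin diagram has exactly two leaves iff it is a path, which among the possibly non-reduced restricted root systems means types $A_n$, $B_n$, $C_n$, $BC_n$, $F_4$, $G_2$; the excluded types are $D_n$ ($n\ge4$) and $E_6,E_7,E_8$, each of which has three leaves. This matches the theorem, and the converse is immediate from the third subcase above. The main obstacle is the enumeration in the previous paragraph---particularly verifying that neither leaves inside $S$ nor non-leaf vertices can contribute to $I(x)$ under the biorbitope hypothesis; the remainder is tree bookkeeping.
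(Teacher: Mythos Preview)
Your proof is correct and follows essentially the same route as the paper's: translate the biorbitope condition via Corollaries \ref{facetsconj} and \ref{facetsmompol} into $|I(x)|=1$, then analyze which leaves (the paper's ``boundary roots'') of the restricted Dynkin diagram lie in $I(x)$ depending on the support set $S=\Gamma_1$. Your case analysis is a bit more explicit than the paper's (you compute $I(x)$ exactly in each subcase rather than just exhibiting two elements), and your reduction to simple $\g$ is spelled out in more detail, but the underlying argument is the same.
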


In other words, the condition is saying that $\beta(x)\ne0$ for only
one simple restricted root $\beta$, that $\beta$ sits at an end of
the restricted Dynkin graph $\Gamma$, and that $\Gamma$ has at most
one other end.
In \ref{biorb1} and \ref{biorb2} we'll make all biorbitopes explicit
for the classical Lie algebras.

\begin{proof}
By Corollary \ref{orbitsoxo}, $\scrO_x$ is a biorbitope if and only
if $P_x$ has only one $W$-orbit of facets. One sees immediately that
this can hold only when the restricted root system $\Sigma$ is
irreducible.
Therefore we may assume that the Lie algebra $\g$ is simple.

Let $\Gamma_1=\{\beta\in\Gamma\colon\beta(x)\ne0\}$. We say that
$\beta\in\Gamma$ is a boundary root if $\Gamma\setminus\{\beta\}$ is
connected. If $\Gamma_1$ contains a non-boundary root then $P_x$ has
two non-conjugate facets. Indeed, choose two different boundary roots
$\beta_i$ and $\beta_j$. Then $P_x(i)$ and $P_x(j)$ are both facets
of $P_x$, and are not $W$-conjugate, according to Corollary
\ref{facetsmompol}.
Exactly the same argument works if $\Gamma_1$ contains two different
boundary roots $\beta_i$, $\beta_j$.

So all facets of $P_x$ can only be $W$-conjugate if $\Gamma_1$
consists of just one single boundary root. Conversely, if this is the
case then the conjugacy classes of
facets of $P_x$ correspond precisely to the remaining boundary roots.
This proves the equivalence in the theorem, since $D_n$ ($n\ge4$) and
$E_n$ ($n=6,7,8$) are precisely the simple root systems with more than
two boundary roots.
\end{proof}

\begin{example}\label{dn2orb}%
Let $n\ge4$ and $\g=so(n,n)$. For the description of $\a$, $C$, the
$\alpha_i$ and $\lambda_i$ see \ref{sonn}. The restricted root system
is of type $D_n$. If we take $x=(1,\dots,1)$ as in \ref{sonrem}, we
have $\alpha_i(x)=0$ for all $i\ne n$, so $I(x)=\{n\}$ is a singleton
set. Yet $P_x$ has two $W$-orbits of facets, represented by the
facets $P_x(1)=\{y\in P_x\colon y_1=1\}$ and $P_x(n-1)=\{y\in P_x$:
$y_1+\cdots+y_{n-1}-y_n=n-2\}$. Hence
the orbitope $\scrO_x=\conv SO(n)$ has two $K$-orbits of maximal
dimensional faces, a fact already proved in
\cite[Theorem 4.11]{sss}. This means that the coorbitope
$\scrO_x^o=SO(n)^o$ is not an orbitope, rather
$$SO(n)^o\>=\>\conv(Kz_1\cup Kz_{n-1})$$
by Corollary \ref{orbitsoxo}, where $z_1=(1,0,\dots,0)$ and
$z_{n-2}=\frac1{n-2}(1,\dots,1,-1)$ (notation as in \ref{sonn} and
\ref{sonrem}).
A similar remark applies when $x=(1,0,\dots,0)$ (here $\scrO_x$ is
the unit ball of the nuclear norm on $M_n(\R)$, see
\ref{expqexcept})
and of  $x=(1,\dots,1,-1)$ (here $\scrO_x$ is the convex hull of
$O^\minus(n)$,
which is of course linearly isomorphic to $\conv SO(n)$).
\end{example}

Remarkably, whenever $\scrO_x$ is a biorbitope, the coorbitope
$\scrO_x^o$ is a positive scaling of $\scrO_x$:

\begin{thm}\label{oxovsox}%
Let $\g$ be simple and $0\ne x\in C$, and assume that $\scrO_x$ is a
biorbitope, i.e.\ $|I(x)|=1$. Then there is a real number $c>0$ such
that $\scrO_x^o=c\cdot\scrO_x$.
\end{thm}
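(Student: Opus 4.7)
The plan is that, since $|I(x)|=1$, Corollary \ref{orbitsoxo} immediately presents $\scrO_x^o$ as itself an orbitope over a single $K$-orbit; so the theorem reduces to comparing the distinguished extreme points of $\scrO_x$ and $\scrO_x^o$ inside the Weyl chamber $C$, and showing they are positive scalar multiples of each other. Concretely, write $I(x)=\{i_0\}$, so Corollary \ref{orbitsoxo} gives $\scrO_x^o=\conv(Kz_{i_0})$ with $z_{i_0}=h_{\mu_{i_0}}/\mu_{i_0}(x)\in C$. Both $x$ and $z_{i_0}$ lie in $C$, so it suffices to produce $c>0$ with $z_{i_0}=cx$; then $\scrO_x^o=\conv(K(cx))=c\cdot\scrO_x$.

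The main input is the structural information from Theorem \ref{biorbitop}: the biorbitope assumption forces $\beta_j(x)=0$ for every simple restricted root $\beta_j\ne\beta_{i_0}$. Transferring to $\a$ via the Killing form, this says $x\orth h_{\beta_j}$ for all $j\ne i_0$. Since $\{h_{\beta_1},\dots,h_{\beta_n}\}$ is a basis of $\a$ and $h_{\mu_{i_0}}$ satisfies $\bil{h_{\mu_{i_0}}}{h_{\beta_j}}=\bil{\mu_{i_0}}{\beta_j}=\delta_{i_0 j}$, the orthogonal complement of $\{h_{\beta_j}:j\ne i_0\}$ in $\a$ is exactly the line $\R\,h_{\mu_{i_0}}$. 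Hence $x=\lambda\,h_{\mu_{i_0}}$ for some $\lambda\in\R$, and Lemma \ref{stripos} gives $\mu_{i_0}(x)>0$, so $\lambda>0$.

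Substituting back,
$$z_{i_0}\>=\>\frac{h_{\mu_{i_0}}}{\mu_{i_0}(\lambda h_{\mu_{i_0}})}\>=\>\frac{h_{\mu_{i_0}}}{\lambda |h_{\mu_{i_0}}|^2}\>=\>\frac{x}{\lambda^2 |h_{\mu_{i_0}}|^2},$$
so setting $c:=1/\bigl(\lambda^2|h_{\mu_{i_0}}|^2\bigr)>0$ we obtain $z_{i_0}=cx$, and therefore $\scrO_x^o=\conv(Kz_{i_0})=c\cdot\conv(Kx)=c\cdot\scrO_x$. There is no substantive obstacle here: once the rigidity statement from Theorem \ref{biorbitop} is invoked, the argument is a direct calculation with the dual bases $\{h_{\beta_j}\}$ and $\{h_{\mu_j}\}$ of $\a$. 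The only point requiring a moment of care is confirming positivity of $c$, which follows from $\mu_{i_0}(x)>0$ via Lemma \ref{stripos}.
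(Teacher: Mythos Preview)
Your proof is correct and follows essentially the same approach as the paper's: both invoke Corollary~\ref{orbitsoxo} to write $\scrO_x^o=\conv(Kz_{i_0})$, then use Theorem~\ref{biorbitop} to conclude that $\beta_j(x)=0$ for $j\ne i_0$, hence $x$ lies on the line $\R\,h_{\mu_{i_0}}$, and finally compute the scaling factor. The paper pins down the scalar as $\lambda=\beta_{i_0}(x)$ by evaluating both sides at each $\beta_j$, whereas you phrase the same step via the orthogonal-complement description of the line; these are equivalent, and your appeal to Lemma~\ref{stripos} for positivity is a clean way to record what the paper leaves implicit (namely $\beta_{i_0}(x)>0$ since $x\in C$ and $x\ne0$).
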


\begin{proof}
We have $I(x)=\{i\}$ where $\beta_i\in\Gamma$ is a boundary root
(Theorem \ref{biorbitop}). By Corollary \ref{orbitsoxo},
$\scrO_x^o$ is the convex hull of $Kz_i$ where
$z_i=h_{\mu_i}/\mu_i(x)$. The element $x$ itself is a scalar multiple
of $h_{\mu_i}$ since $\beta_j(x)=0$ for all $\beta_j\in
\Gamma\setminus\{\beta_i\}$. More precisely $x=\beta_i(x)h_{\mu_i}$,
since both elements give the same value under every $\beta_j$.
This implies $\mu_i(x)=\beta_i(x)\mu_i(h_{\mu_i})=
\beta_i(x)\,|\mu_i|^2$. So $z_i=h_{\mu_i}/\mu_i(x)=
x/\beta_i(x)^2|\mu_i|^2$, and therefore
$$\scrO_x^o\>=\>\frac1{\beta_i(x)^2\cdot|\mu_i|^2}\,\scrO_x.$$
\end{proof}


\section{Doubly spectrahedral orbitopes}\label{sec:doubly}%

Following Saunderson, Parrilo and Willsky \cite{spw} we use the term
\emph{doubly spectrahedral convex sets} to refer to convex sets $S$
in $\R^n$ for which both $S$ and the polar convex set $S^o$ are
spectrahedra. As remarked in \cite{spw}, it is a very special
phenomenon that the polar set of a spectrahedron is again a
spectrahedron. Apart from polyhedra (which have this property for
obvious reasons) it seems that only one other distinct family of
doubly spectrahedral convex sets is known, namely the homogeneous
convex cones (Vinberg \cite{vi} and Chua \cite{ch}, see
\cite[6.1]{spw}). In addition, the convex hull of the matrix group
$SO(n)$ is doubly spectrahedral for every $n\ge1$, by the main
theorem of \cite{spw}. In fact, explicit spectrahedral
representations for both $\conv SO(n)$ and $SO(n)^o$ were constructed
in \cite{spw}.

Below we show that all polar orbitopes $\scrO_x$ with ``rational
coordinates'' are doubly spectrahedral as well. Moreover we'll give
explicit linear matrix inequality representations for those orbitopes
and their polars. As a particular case, we recover the results from
\cite{spw}, see Remark \ref{spwrem} below.

Let $\g=\k\oplus\p$ be a real semisimple Lie algebra with Cartan
decomposition, and consider the adjoint representation of $K$ on $\p$
as before. We use notation and conventions from Section \ref{lienot}.
In particular, $\a$ is a maximal abelian subspace of $\p$, and
$C\subset\a$ is the Weyl chamber with respect to the fixed ordering
on~$\a$. As before, let $\Gamma=\{\beta_1,\dots,\beta_n\}\subset
\a^\du$ be the simple positive restricted roots.

\begin{dfn}
Given $x\in\a$, we say that the $K$-orbitope $\scrO_x=\conv(Kx)$ has
\emph{rational coordinates} if there is $b\in\R$ such that
$\beta_j(x)\in\Q b$ for $j=1,\dots,n$.
\end{dfn}

Since any two choices of $\a\subset\p$ are conjugate under $K$
\cite[6.51]{kn},
and since every $K$-orbit in $\p$ intersects $\a$ in one full
$W$-orbit (Corollary \ref{cor3kost}), the property of having rational
coordinates depends only on the orbit $Kx$, and neither on the choice
of $\a$ nor on the particular choice of a representative of $Kx$
in~$\a$.

\begin{thm}\label{doublspect}%
Let $\scrO_x$ be a polar orbitope with rational coordinates. Then
both $\scrO_x$ and $\scrO_x^o$ are spectrahedra.
\end{thm}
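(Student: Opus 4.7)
Since Theorem \ref{polorbspekt} already produces a spectrahedral representation of $\scrO_x$ without any rationality hypothesis, the remaining task is to exhibit $\scrO_x^o$ as a spectrahedron. We may assume $x\in C$. The first step is to reduce the polar condition to a single linear inequality on the Weyl-chamber representative. For $y\in\p$, let $\hat y\in C$ denote the unique point of $C\cap Ky$ (Corollary \ref{cor3kost}). The support function $h_{\scrO_x}(y)=\sup_{g\in K}\bil y{gx}$ is $K$-invariant in $y$, so coincides with $h_{\scrO_x}(\hat y)$. Using $\hat y\in\a$, the identity $\pi(Kx)=P_x$ (Theorem \ref{kostant}), and the standard fact that $\sup_{w\in W}\bil{\hat y}{wx}=\bil{\hat y}x$ for $x,\hat y\in C$, this common value equals $\bil{\hat y}x$. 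Hence $y\in\scrO_x^o$ if and only if $\bil{\hat y}x\le1$, and the identity $\beta_k(h_{\mu_j})=\delta_{jk}$ (see \ref{polarpoints}) rewrites $x=\sum_{j=1}^n\beta_j(x)h_{\mu_j}$ and recasts the polar condition as $\sum_{j=1}^n\beta_j(x)\mu_j(\hat y)\le1$.

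The plan is now to realize this single inequality as an LMI on $y$ through the largest-eigenvalue map of a suitable irreducible representation of $\g_\C$. Concretely, one seeks a dominant integral weight $\omega=\sum_{i=1}^lm_i\omega_i$ (with $m_i\in\Z_{\ge0}$, not all zero) and a number $\lambda>0$ satisfying
\[
r(\omega) = \lambda\sum_{j=1}^n\beta_j(x)\mu_j\quad\text{in }\a^\du.
\]
By Lemma \ref{restrfundgew}, $r(\omega_i)=\tfrac12|\alpha_i|^2 q_i\mu_{r(\alpha_i)}$ with $q_i\in\Q_{>0}$; moreover $|\alpha_i|^2\in\Q$ since the Killing form takes rational values on a Chevalley basis. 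Grouping indices by the fibres $\Pi_j=\{i\in\Pi\colon r(\alpha_i)=\beta_j\}$, which by the discussion following Lemma \ref{restrfundgew} have size $1$ or $2$, the equation for $r(\omega)$ decouples into, for each $j$, a single $\Q$-linear equation in at most two unknowns $m_i$ with strictly positive rational coefficients. The rationality hypothesis $\beta_j(x)\in\Q b$ then lets us choose $\lambda$ as a positive rational multiple of $b^{-1}$ large enough to clear denominators, and thereafter nonnegative integers $m_i$ solving the equations (setting $m_i=0$ for $i$ with $r(\alpha_i)=0$).

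With $\omega$ and $\lambda$ so chosen, let $\sigma\colon\g_\C\to\End(V_\omega)$ be the irreducible representation of highest weight $\omega$, equipped with a $K$-invariant hermitian inner product on $V_\omega$ under which $\sigma(z)$ is self-adjoint for every $z\in\p$ (Lemma \ref{ewprelmaxev}(a)). For $\hat y\in C$, Lemma \ref{ewprelmaxev}(b) gives $\lambda_{\max}(\sigma(\hat y))=\omega(\hat y)=r(\omega)(\hat y)=\lambda\bil{\hat y}x$. For arbitrary $y=\Ad(g)\hat y\in\p$ with $g\in K$, the operator $\sigma(y)$ is a unitary conjugate of $\sigma(\hat y)$ (via the integrated $K$-action on $V_\omega$), so its largest eigenvalue is again $\lambda\bil{\hat y}x$. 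Combining with the first step, $y\in\scrO_x^o$ if and only if $\sigma(y)\preceq\lambda\cdot\id$, an explicit LMI. The main obstacle is the arithmetic of the second step: the three layers of rationality (of $\beta_j(x)$, of the constants $q_i$ from Lemma \ref{restrfundgew}, and of the Killing-form root lengths $|\alpha_i|^2$) must be aligned to produce a nonnegative integer solution, and this is precisely where the hypothesis of rational coordinates is used.
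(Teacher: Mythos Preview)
Your proof is correct and follows essentially the same route as the paper: reduce the membership condition $y\in\scrO_x^o$ to the single inequality $\bil{\hat y}x\le1$ on the Weyl-chamber representative, use Lemma \ref{restrfundgew} together with the rationality hypothesis to realize the linear form $\bil x{\cdot}$ on $\a$ as (a positive rational multiple of) the restriction of a dominant integral weight $\omega$, and then invoke Lemma \ref{ewprelmaxev}(b) for the irreducible representation of highest weight $\omega$. The paper packages the first step as two short lemmas ($P_x^o\cap C=\{y\in C:\bil xy\le1\}$ and $\scrO_x^o\cap\a=P_x^o$) and then compares the two $K$-invariant sets on $C$, whereas you argue via the support function and explicit unitary conjugation; these are cosmetic differences only.
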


For $\scrO_x$, a spectrahedral representation has been given in
Theorem \ref{polorbspekt}. In \ref{lmioxo} below we explain how to
find one for $\scrO_x^o$. Explicit descriptions of these orbitopes
are contained in the next section, c.f.\ Remark \ref{doublspectex}.
\smallskip

Given $x\in\a$, let $P_x=\conv(Wx)$ be the momentum polytope of $x$
as before, and let $P_x^o$ be the polar set of $P_x$ in $\a$, i.e.
$$P_x^o\>=\>\{y\in\a\colon\all w\in W\ \bil{wx}{y}\le1\}.$$
We have the following lemma:

\begin{lem}\label{lab7}%
If $x\in C$ then $P_x^o\cap C=\{y\in C\colon\bil xy\le1\}$.
\end{lem}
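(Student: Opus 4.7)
My plan is to handle the two inclusions separately. The inclusion $P_x^o\cap C\subseteq\{y\in C\colon\bil xy\le1\}$ is immediate by taking $w=e$ in the defining condition of $P_x^o$, which yields $\bil xy\le1$ for any $y$ in $P_x^o$. The substantive direction is $\supseteq$: I need to show that if $y\in C$ and $\bil xy\le1$, then $\bil{wx}y\le1$ for every $w\in W$. Using the symmetry $\bil{wx}y=\bil x{w^{-1}y}$, this will follow at once from the purely Weyl-group inequality
$$\bil x{wy}\>\le\>\bil xy\qquad(x,y\in C,\ w\in W),$$
combined with the hypothesis $\bil xy\le 1$.

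To establish this inequality, the plan is to show $y-wy\in T$ for every $w\in W$, and then exploit the fact that $x\in C$ pairs nonnegatively with every element of $T$ (the defining property of $T$ as the dual cone of $C$) to conclude $\bil x{y-wy}\ge0$. The key step $y-wy\in T$ is exactly what Proposition \ref{kost33}(a) delivers when both of its arguments are taken to be $y\in C$: since $y\in Wy\subseteq\conv(Wy)=P_y$, part (a), read with both $x$ and $y$ replaced by $y$, says precisely that $y-wy\in T$ for every $w\in W$.

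Putting the ingredients together, $\bil x{wy}\le\bil xy\le 1$, hence $\bil{wx}y=\bil x{w^{-1}y}\le 1$, giving $y\in P_x^o$. The entire argument rests on a self-application of Proposition \ref{kost33}(a), so there is no real technical obstacle; the content of the lemma is the observation that on the Weyl chamber, polarity against the whole orbit $Wx$ reduces to polarity against the single representative~$x$.
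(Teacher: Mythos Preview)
Your proof is correct and follows the same line as the paper's: both directions hinge on the inequality $\bil{wx}{y}\le\bil xy$ for $x,y\in C$ and $w\in W$. The only difference is that the paper obtains this inequality by citing \cite[Lemma~3.2]{ko} directly, whereas you derive it internally from Proposition~\ref{kost33}(a) applied with both arguments equal to~$y$ (giving $y-wy\in T$, hence $\bil x{y-wy}\ge0$ since $x\in C$ and $T=C^\du$). Your route has the small advantage of being self-contained within the results already stated in the paper.
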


\begin{proof}
If $y\in C$ then $\bil{wx}y\le\bil xy$ for every $w\in W$
\cite[Lemma 3.2]{ko}.
Therefore, if $\bil xy\le1$ then $y\in P_x^o$. The opposite inclusion
is trivial from the definition.
\end{proof}

Recall that $\pi\colon\p\to\a$ denotes orthogonal projection to $\a$.
Forming the polar convex body commutes with projection to (or
intersection with)~$\a$:

\begin{lem}\label{lab3}%
Let $\scrO\subset\p$ be a $K$-invariant convex set, and let
$Q:=\scrO\cap\a=\pi(\scrO)$ (\ref{cor2kost}). Then
$\pi(\scrO^o)=\scrO^o\cap\a=Q^o$ (the polar set of $Q$ in $\a$).
\end{lem}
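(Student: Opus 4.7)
The plan is to deduce the lemma by unwinding the definitions, using two elementary ingredients: the $K$-invariance of the polar $\scrO^o$, and the orthogonality of the projection $\pi$ with respect to the Killing form on $\p$.

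First I would observe that $\scrO^o$ is itself $K$-invariant. Indeed, $K$ acts on $\p$ by isometries with respect to the Killing form, so if $y\in\scrO^o$ and $g\in K$, then for every $x\in\scrO$ we have $\bil x{gy}=\bil{g^{-1}x}y\le 1$ because $g^{-1}x\in\scrO$. Having this, Corollary \ref{cor2kost} applied to the $K$-invariant convex set $\scrO^o$ immediately yields $\pi(\scrO^o)=\scrO^o\cap\a$, which takes care of the first equality in the statement.

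The remaining point is $\scrO^o\cap\a=Q^o$. This uses the fact that, since $\pi$ is orthogonal projection onto $\a$ with respect to the Killing form, one has $\bil xy=\bil{\pi(x)}y$ for every $x\in\p$ and every $y\in\a$. Hence for $y\in\a$,
\[
y\in\scrO^o\ \iff\ \bil xy\le1\text{ for all }x\in\scrO\ \iff\ \bil{\pi(x)}y\le1\text{ for all }x\in\scrO,
\]
which, since $\pi(\scrO)=Q$, is equivalent to $\bil zy\le1$ for all $z\in Q$, i.e.\ to $y\in Q^o$.

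There is really no obstacle here: the argument is a routine bookkeeping once one notes $K$-invariance of the polar and the compatibility of the Killing form with orthogonal projection. The only mild subtlety is to keep track of where the polar is being taken (inside $\p$ versus inside $\a$), and to apply \ref{cor2kost} to $\scrO^o$ rather than to $\scrO$ itself.
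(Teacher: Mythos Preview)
Your proof is correct and follows the same approach as the paper: both use the identity $\bil xy=\bil{\pi(x)}y$ for $y\in\a$ to handle the equality $\scrO^o\cap\a=Q^o$. The paper's proof is extremely terse (just this identity plus ``from this the lemma follows immediately''), whereas you spell out the $K$-invariance of $\scrO^o$ and the application of Corollary~\ref{cor2kost} to it for the first equality---details the paper leaves implicit.
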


\begin{proof}
For $y\in\a$ and $z\in\p$ we have $\bil y{\pi(z)}=\bil yz$. From this
the lemma follows immediately.
\end{proof}

\begin{lab}
\textsc{Proof} of Theorem \ref{doublspect}.
We can assume $x\in C$. Since $x$ has rational coordinates we can
assume $\beta_j(x)\in\Q$ for $j=1,\dots,n$, after scaling $x$ with a
suitable positive real number. So there exist rational numbers
$c_j\ge0$ such that $\bil xy=\sum_{j=1}^nc_j\mu_j(y)$ for all
$y\in\a$ (namely $c_j=\beta_j(x)$).
Hence, and by Lemma \ref{restrfundgew}, there are an integer $k\ge1$
and an integral dominant weight $\omega\in\h^\du$ of $(\g_\C,\h_\C)$
such that $k\cdot\bil xy=\omega(y)$ for all $y\in\a$.
By the highest weight theorem, there is an irreducible representation
$\rho$ of $\g_\C$ with highest weight $\omega$. From Lemmas
\ref{lab7} and \ref{lab3} we get
\begin{equation}\label{polarcapc}%
\scrO_x^o\cap C\>=\>P_x^o\cap C\>=\>\{y\in C\colon\bil xy\le1\}
\end{equation}
We claim that \eqref{polarcapc} implies
\begin{equation}\label{claim}%
\scrO_x^o\>=\>\{y\in\p\colon\rho(y)\preceq k\cdot\id\}.
\end{equation}
Indeed, both sets in \eqref{claim} are $K$-invariant, so it suffices
to check that their intersections with $C$ coincide. For $y\in C$ the
largest eigenvalue of $\rho(y)$ is $\omega(y)=k\bil xy$ (Lemma
\ref{ewprelmaxev}(b)). So \eqref{claim} follows indeed from
\eqref{polarcapc}, and the theorem is proved.
\end{lab}

\begin{rem}\label{lmioxo}%
The highest weights of irreducible representations of $(\g_\C,\h_\C)$
are the nonnegative integral linear combinations of the fundamental
weights $\omega_i=\frac12|\alpha_i|^2\lambda_i$ ($i=1,\dots,l)$. The
restriction of $\omega_i\in\h^\du$ to $\a$ is $0$ if $r(\alpha_i)=0$,
and is $\frac1{2m}|\alpha_i|^2\mu_j$ if $r(\alpha_i)=\beta_j\ne0$,
where $m\in\{1,2\}$ is the number of simple roots in $\Pi$ that
restrict to $\beta$ (Lemma \ref{restrfundgew}). Since the
$|\alpha_i|^2$ are explicit rational numbers, we see how to find, for
given $x\in\a$ with rational coordinates, a real number $c>0$ and an
integral dominant weight $\omega$ of $(\g_\C,\h_\C)$ such that
$\bil{cx}y=\omega(y)$ for all $y\in\a$.
\end{rem}

\begin{example}\label{spwrem}%
We illustrate the previous remark with the example already studied in
\ref{sonn}, so consider the action of $K=SO(n)\times SO(n)$ on
$M_n(\R)$ for $n\ge3$. We take the identity matrix $x=I_n$ as in
\ref{sonrem} and are looking for a linear matrix inequality
description of the coorbitope $\scrO_x^o=(\conv SO(n))^o$. The
orbitope $\scrO_x$ has rational coordinates since $\alpha_i(x)=0$
for $1\le i<n$ and $\alpha_n(x)=2$ (see \ref{sonn}). Since
$\bil xy=\sum_{i=1}^ny_i=2\lambda_n(y)$ for $y\in\a$, the procedure
in \ref{lmioxo} leads to the spectrahedral representation
$$\bigl(\conv SO(n)\bigr)^o\>=\>\Bigl\{y\in M_n(\R)\colon\rho_n
\begin{pmatrix}0&y\\y^t&0\end{pmatrix}\preceq\frac12\,\id\Bigr\}$$
where $\rho_n$ is the $n$-th fundamental representation. This is in
accordance with Saunderson, Parrilo and Willsky
\cite[Theorem 1.1]{spw}, c.f.\ the remark in \ref{sonrem}.

For $n=3$, $\scrO_x^o$ is a $K$-orbitope itself. For $n\ge4$,
$\scrO_x^o$ is the convex hull of two $K$-orbits, but not of one
(Example \ref{dn2orb}).
\end{example}


\section{Examples}\label{sect:exs}

We describe all irreducible polar representations that arise from
semisimple Lie algebras of classical type.
Roughly, these are the well-known unitary group actions on
rectangular matrices, and on (skew-) hermitian resp.\ (skew-)
symmetric square matrices, over $\K=\R,\,\C$ or $\bbH$, where $\bbH$
is the skew-field of Hamilton quaternions. (For $\K=\bbH$ there is
no action on (skew-) symmetric matrices.)
In each case we mention a standard choice of a maximal abelian
subspace $\a$ and of a Weyl chamber $C$. Using Kostant's results, in
particular Proposition \ref{kost33} and Corollary \ref{cor2kost},
this allows us to give explicit descriptions of the respective
orbitopes in all cases. Naturally, this uses the description of the
(reduced) root systems and of the fundamental weights, for which
there are many references (e.g.\ \cite{kn}, \cite{ti}, \cite{lie3}).
We will see that the corresponding orbitopes can be described in
terms of Ky Fan norm balls, which in turn are defined using singular
values of matrices.

\begin{lab}
First recall the singular value decomposition. Let always $\K$ be one
of $\R$, $\C$ or $\bbH$,
and let $U(n,\K)$ denote the unitary group over $\K$, i.e.\
$U(n,\K)=\{g\in M_n(\K)\colon gg^*=I_n\}$ where $g^*=\ol g^t$. So
$U(n,\R)=O(n)$ is the real orthogonal group, $U(n,\C)=U(n)$ is the
usual (complex) unitary group and $U(n,\bbH)=Sp(n)$ is the symplectic
group. Given a (rectangular) matrix $x\in M_{m\times n}(\K)$ where
$m\ge n$, there exist unitary matrices $u\in U(m,\K)$ and
$v\in U(n,\K)$ such that $uxv^*=\diag(a_1,\dots,a_n)$ with real
numbers $a_1\ge\cdots\ge a_n\ge0$. The $a_i$ are uniquely determined
by $x$, they are called the \emph{singular values} of~$x$ and denoted
$\sigma_i(x):=a_i$ ($1\le i\le n$). For $\K=\R$ or $\C$ this is
classical (e.g.\ \cite[2.6]{hj}), here $a_1^2,\dots,a_n^2$ are the
eigenvalues of the psd hermitian matrix $x^*x$. For $\K=\bbH$,
essentially the same is true (with eigenvalues replaced by right
eigenvalues), but less well-known; see \cite[7.2]{zh} and
\cite[5.7]{fp} for details.
\end{lab}

\begin{lab}\label{kyfanorm}%
Let $\K=\R,\,\C$ or $\bbH$, and let $V=M_{m\times n}(\K)$ with
$m\ge n$. For $k=1,\dots,n$ and $x\in V$ let
$$\Vert x\Vert_k\>:=\>\sigma_1(x)+\cdots+\sigma_k(x),$$
sum of the $k$ largest singular values of $x$. This defines a
matrix norm on the space of matrices, the \emph{$k$-th Ky Fan norm}
(\cite{fa} and \cite{hj}, 7.4.8 and 7.4.10).
In particular, all balls with respect to any of these norms are
convex.
Note that the first Ky Fan norm $\Vert x\Vert_1=\sigma_1(x)$ is the
operator norm of $x$. The last one $\Vert x\Vert_n=\sum_{i=1}^n
\sigma_i(x)$ is called the nuclear norm and often denoted
$\Vert x\Vert_*$.
\end{lab}

In view of Remark \ref{dirprod}, we restrict our discussion of
classical polar orbitopes to orbitopes that arise from simple real
Lie algebras $\g$ of classical type.

\begin{example}\label{expqnonexcept}%
Let $m\ge n\ge1$. Consider $K=SO(m)\times SO(n)$ (case $\K=\R$)
resp.\ $K=S(U(m)\times U(n))$ (case $\K=\C$) resp.\ $K=Sp(m)\times
Sp(n)$ (case $\K=\bbH$), together with the action of $(u,v)\in K$ on
$x\in V=M_{m\times n}(\K)$ by $uxv^*$. This is a polar
representation, arising from the simple Lie algebra $\g=so(m,n)$
resp.\ $\g=su(m,n)$ resp.\ $\g=sp(m,n)$ (assume $m+n\ne2,\,4$ if
$\K=\R$).
For $\a\cong\R^n$ we can take the space of real matrices that are
diagonal in the upper $n$ rows and zero below. If $(a_1,\dots,a_n)$
is the diagonal part of such a matrix $x$, let us write $x_i:=a_i$.
The case $\K=\R$ and $\g=so(m,n)$ with $m=n$ is exceptional (see
\ref{expqexcept} below), so let us first discard it. In all other
cases the Weyl chamber $C$ consists of all $x\in\a$ with
$x_1\ge\cdots\ge x_n\ge0$. Moreover, using Lemma \ref{kost33}
we see that
$$P_x\cap C\>=\>\bigl\{y\in C\colon y_1+\cdots+y_k\le x_1+\cdots+x_k\
(k=1,\dots,n)\bigr\}$$
If $x\in C$ then clearly $x_i=\sigma_i(x)$, the $i$-th singular value
of~$x$. It follows for arbitrary $x\in V$ that
$$\scrO_x\>=\>\bigl\{y\in V\colon\Vert y\Vert_k\le\Vert x\Vert_k\
(k=1,\dots,n)\bigr\}$$
since both sets are $K$-invariant and their intersections with $C$
coincide. So $\scrO_x$ is an intersection of balls with center $0$
with respect to the Ky Fan norms $\Vert\cdot\Vert_k$ ($k=1,\dots,n$),
the radii of the balls being the norms of~$x$. Note that the
$K$-orbit $Kx$ consists of all matrices with the same singular values
as~$x$.
\end{example}

\begin{example}\label{expqexcept}%
Now consider the exceptional case $\K=\R$ and $m=n$ of the previous
example, so we have the natural action of $K=SO(n)\times SO(n)$ on
$V=M_n(\R)$.
Here the Weyl chamber $C$ consists of all $x=\diag(x_1,\dots,x_n)\in
\R^n$ with $x_1\ge\cdots\ge x_{n-1}\ge|x_n|$, and
$$P_x\cap C\>=\>\Bigl\{y\in C\colon\sum_{i=1}^k(x_i-y_i)\ge0\
(k=1,\dots,n-1),\ |x_n-y_n|\le\sum_{i=1}^{n-1}(x_i-y_i)\Bigr\}$$
Now the last diagonal entry $x_n$ of $x\in C$ coincides with the
smallest singular value $\sigma_n(x)$ only up to sign. With similar
reasoning as in \ref{expqnonexcept} we conclude
$$\scrO_x\>=\>\bigcap_{k=1}^n\bigl\{y\colon\Vert y\Vert_k\le
\Vert x\Vert_k\bigr\}\cap\Bigl\{y\colon\sigma_n(x)-\sigma_n(y)\le
\Vert x\Vert_{n-1}-\Vert y\Vert_{n-1}\Bigr\}.$$
\end{example}

\begin{rem}\label{ovsso}%
(Orthogonal vs.\ special orthogonal group)
Let $m\ge n$, let $K=SO(m)\times SO(n)$ and $K'=O(m)\times O(n)$. If
$m>n$, it is easy to see that the $K'$-orbit of an arbitrary matrix
$x\in M_{m\times n}(\R)$ coincides with the $K$-orbit of~$x$.
For $m=n$ this is true if $\det(x)=0$, but otherwise $K'x$ is the
union of two distinct $K$-orbits, as one sees from the determinant.
A spectrahedral representation of the $K'$-orbitope $\scrO'_x:=
\conv(K'x)$ was given in \cite[Theorem 4.7]{sss} in the case $m=n$
(and $\scrO'_x$ was called a \emph{Fan orbitope} there). Note that
$\scrO'_x=\conv O(n)$ if $x=I_n$. Moreover, a spectrahedral
representation of the coorbitope $(\conv O(n))^o$ was provided in
\cite{sss} (Corollary 4.9). Both representations can easily be
derived from our discussion of $K$-(co)orbitopes.
\end{rem}

\begin{example}\label{hermact}%
Next consider the actions of the classical compact simple Lie
groups on hermitian matrices. Let $n\ge2$, and let $K=SO(n)$ (case
$\K=\R$) resp.\ $K=SU(n)$ (case $\K=\C$) resp.\ $K=Sp(n)$ (case
$\K=\bbH$).
Let $V=\{x\in M_n(\K)\colon x=x^*\}$, the space of hermitian
$n\times n$-matrices over $\K$, and let $V_0=\{x\in V\colon
\tr(x)=0\}$, where for $\K=\bbH$ the trace condition has to be
replaced by $\trd(x)=0$ (reduced trace). We let $g\in K$ act on
$x\in V$ by $gxg^*$. Clearly $V_0$ is $K$-invariant, and
$V=V_0\oplus\R$ as $K$-modules. The action of $K$ on $V_0$ is an
irreducible polar representation, resulting from the simple Lie
algebra $\g=sl(n,\R)$ resp.\ $\g=su(n,\C)$ resp.\ $\g=sl(n,\bbH)$. We
let $\a\cong\R^{n-1}$ be the space of all real diagonal matrices
$x=(x_1,\dots,x_n)$ with trace zero.
The Weyl chamber is $C=\{x\in\a\colon x_1\ge\cdots\ge x_n\}$, and for
$x=(x_1,\dots,x_n)\in\a$ we have
$$P_x\cap C\>=\>\Bigl\{y\in C\colon\sum_{i=1}^ky_i\le\sum_{i=1}^kx_i\
(i=1,\dots,n)\Bigr\}$$
The $K$-orbit $Kx$ consists of all hermitian matrices with the same
eigenvalues as~$x$ (for $\K=\bbH$ one has to speak of right
eigenvalues instead of eigenvalues \cite{fp}).

In order to describe the $K$-orbitope $\scrO_x$ we replace $x\in V_0$
by $x'=x+cI\in V$, where $c\ge0$ is chosen such that $x'\succeq0$,
i.e.\ $x'$ is positive semidefinite (\emph{psd}). Of course this
doesn't change the orbitope up to an affine-linear isomorphism. So
let $x\in V$ be a psd hermitian matrix. Then clearly $y\succeq0$
holds for every $y\in\scrO_x$,
and the sequence $\sigma(y)=(\sigma_1(y),\dots,\sigma_n(y))$ of
singular values coincides with the sequence of (right) eigenvalues
for these~$y$. So we see that
$$\scrO_x\>=\>\bigl\{y\in V\colon y\succeq0,\ \sigma(y)\nml
\sigma(x)\bigr\},$$
where for nonincreasing sequences $a,\,b\in\R^n$ the majorization
relation $\nml$ is defined by
$$(b_1,\dots,b_n)\>\nml\>(a_1,\dots,a_n)\ :\iff\ \sum_{i=1}^kb_i
\>\le\>\sum_{i=1}^ka_i$$
for $k=1,\dots,n$, with equality for $k=n$ \cite[4.3.41]{hj}.
In terms of Ky Fan norms this says that $\scrO_x$ is the set of all
$y\in V$ with $y\succeq0$ and $\Vert y\Vert_k\le\Vert x\Vert_k$ for
$k=1,\dots,n-1$ and $\Vert y\Vert_n=\Vert x\Vert_n$ (provided that
$x\succeq0$).
\end{example}

\begin{example}\label{unitskewherm}%
Next let the unitary group over $\K$ act on skew-hermitian matrices
over $\K$ by $gxg^*$. For $\K=\C$ this is essentially the action of
$SU(n)$ on hermitian matrices, already considered in \ref{hermact},
since a complex matrix $x$ is skew-hermitian if and only if $ix$ is
hermitian.
For the remaining two cases we have $\K=\R$ and $K=SO(n)$ (with
$n\ge3$),
or $K=\bbH$ and $K=Sp(n)$ (with $n\ge1$),
and $V$ is the space of skew-hermitian ($x+x^*=0$) matrices over $\K$
of size~$n$. This is an irreducible polar representation of $K$,
namely the adjoint action of $K$ on its Lie algebra $V=Lie(K)$ (so
$\g=Lie(K)_\C$ here.)
A maximal abelian subspace can be described as follows. If $\K=\R$
and $K=SO(n)$, put $m:=\lfloor\frac n2\rfloor$ and let $\a$ consist
of all real block matrices $x=\begin{pmatrix}0&\tilde x\\-\tilde x&0
\end{pmatrix}$ where $\tilde x=\diag(x_1,\dots,x_m)$; if $n$ is odd,
an extra row (at the bottom) and column (at the right) of zeros has
to be added. If $\K=\bbH$ and $K=Sp(n)$, let $\a$ consist of all
diagonal matrices $x=(ix_1,\dots,ix_n)$ with $x_1,\dots,x_n\in\R$.

To describe Weyl chamber and orbitopes, let first $\K=\R$ and
$K=SO(n)$. The Weyl chamber $C$ consists of all $x\in\a$ with
$x_1\ge\cdots\ge x_m\ge0$ (case $n$ odd), resp.\ $x_1\ge\cdots\ge
x_{m-1}\ge|x_m|$ (case $n$ even). The description of $P_x\cap C$,
for $x\in C$, is analogous to \ref{expqnonexcept} resp.\
\ref{expqexcept}. The singular values of $x\in C$ are
$x_1,x_1,\dots,x_m,x_m$, with an extra zero if $n$ is odd.
So we get $\scrO_x=\bigcap_{k=1}^m\bigl\{y\in V\colon
\Vert y\Vert_{2k}\le\Vert x\Vert_{2k}\bigr\}$
for $n=2m+1$ odd, and
$$\scrO_x\>=\>\bigcap_{k=1}^m\{y\colon\Vert y\Vert_{2k}\le
\Vert x\Vert_{2k}\}\cap\Bigl\{y\colon\sigma_n(x)-\sigma_n(y)\le
\Vert x\Vert_{n-2}-\Vert y\Vert_{n-2}\Bigr\}$$
for $n=2m$ even. Note that in either case, only the even Ky Fan norms
are needed.

If $\K=\bbH$ and $K=Sp(n)$, the Weyl chamber $C$ consists of all
$x\in\a$ with $x_1\ge\cdots\ge x_m\ge0$, and we find again
$\scrO_x=\bigcap_{k=1}^n\bigl\{y\in V\colon\Vert y\Vert_k\le
\Vert x\Vert_k\bigr\}$ for $x\in V$.
\end{example}

\begin{example}\label{unitcongr}%
There remains the action of the complex unitary group $K=U(n)$ on
$V=sym(n,\C)$ resp.\ $V=so(n,\C)$ (symmetric resp.\ skew-symmetric
complex matrices) by $gxg^t$ ($g\in K$, $x\in V$). Again this is an
irreducible polar representation that arises from the simple Lie
algebra $\g=sp(n,\R)$ (for $V=sym(n,\C)$) resp.\ $\g=so^*(2n)$ (for
$V=so(n,\C)$).

First let $V=sym(n,\C)$. A maximal abelian subspace $\a$ consists of
all real diagonal matrices $x=\diag(x_1,\dots,x_n)$, and the Weyl
chamber is $C=\{x\in\a\colon x_1\ge\cdots\ge x_n\ge0\}$. For $x\in C$
we have $P_x\cap C=\{y\in C\colon y_1+\cdots+y_k\le x_1+\cdots+x_k$
$(k=1,\dots,n)\}$. Since $x_i=\sigma_i(x)$ for $x\in\a$,
we get
$$\scrO_x\>=\>\bigl\{y\in sym(n,\C)\colon\Vert y\Vert_k\le
\Vert x\Vert_k\ (k=1,\dots,n)\bigr\}$$
In the skew-symmetric case $V=so(n,\C)$ let $m=\lfloor\frac n2
\rfloor$. For $n$ even, a maximal abelian subspace $\a$ consists of
all block matrices $x=\begin{pmatrix}0&\tilde x\\-\tilde x&0
\end{pmatrix}$ with $\tilde x=\diag(x_1,\dots,x_m)$ a real diagonal
matrix. For $n$ odd the description is the same, except that one row
(at the bottom) and one column (at the right) of zeros has to be
added. In either case the Weyl chamber $C$ consists of all $x\in\a$
with $x_1\ge\cdots\ge x_m\ge0$. For $x\in\a$ the singular values of
$x$ are $x_1,x_1,\dots,x_m,x_m$, together with an extra zero if $n$
is odd. Once more we therefore find
$$\scrO_x\>=\>\bigl\{y\in so(n,\C)\colon\Vert y\Vert_k\le
\Vert x\Vert_k\ (k=1,\dots,n)\bigr\}$$
The fact that any (skew-) symmetric complex matrix is unitarily
congruent to a real matrix in $\a$ as above is known as Youla's
theorem (see e.g.\ \cite[Theorem 4.4.9]{hj}).%
\end{example}

\begin{rem}\label{doublspectex}%
For all the examples from \ref{expqnonexcept} to \ref{unitcongr},
the following is true: The orbitope $\scrO_x$ is doubly
spectrahedral, provided that all singular values of $x$ are rational
numbers. This follows from Theorem \ref{doublspect}.
\end{rem}

\begin{rem}
As we have seen, most of the classical irreducible polar orbitopes
are intersections of Ky Fan balls of matrices, possibly intersected
with suitable linear spaces of matrices (like (skew-) symmetric or
(skew-) hermitian). It is easy to see that every Ky Fan ball is a
spectrahedron. This gives a second proof of Theorem
\ref{connpolorbsp} in those cases where $\scrO_x$ is an intersection
of such balls:
\end{rem}

\begin{prop}\label{kyfanballspectr}%
Let $\K\in\{\R,\,\C,\,\bbH\}$ and $m\ge n$, let $1\le k\le n$, and
let $\Vert\cdot\Vert_k$ denote the $k$-th Ky Fan norm on
$M_{m\times n}(\K)$ (\ref{kyfanorm}). Then the unit ball
$$B_k\>:=\>\bigl\{x\in M_{m\times n}(\K)\colon\Vert x\Vert_k\le1
\bigr\}$$
is a spectrahedron.
\end{prop}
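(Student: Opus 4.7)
The plan is to realise $B_k$ as the solution set of a single linear matrix inequality, using the Lie-algebra $k$-th exterior power of the standard representation. This is essentially the same device that underpins Theorem \ref{polorbspekt}, applied directly rather than via the general orbitope machinery.

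First I would reduce to the case $\K=\C$. For $\K=\R$ the inclusion $M_{m\times n}(\R)\subset M_{m\times n}(\C)$ is $\R$-linear and preserves singular values, so $B_k^\R$ is an $\R$-linear slice of the complex Ky Fan ball and inherits any spectrahedral description. For $\K=\bbH$ one uses the standard $\R$-linear embedding $M_{m\times n}(\bbH)\hookrightarrow M_{2m\times 2n}(\C)$ given by $A+Bj\mapsto\left(\begin{smallmatrix}A&-\ol B\\B&\ol A\end{smallmatrix}\right)$; under this map each singular value of $X$ appears with multiplicity two, so $\|X\|_k\le1$ in $M_{m\times n}(\bbH)$ is equivalent to an inequality $\|\mathrm{cpx}(X)\|_{2k}\le2$ in $M_{2m\times 2n}(\C)$. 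Again $B_k$ becomes an $\R$-linear slice of a complex Ky Fan ball, hence inherits a spectrahedral presentation.

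For $\K=\C$, attach to $X\in M_{m\times n}(\C)$ the hermitian block matrix
$$\tilde X\>=\>\begin{pmatrix}0&X\\X^*&0\end{pmatrix}\in M_{m+n}(\C).$$
Its eigenvalues are $\pm\sigma_1(X),\dots,\pm\sigma_n(X)$ together with $|m-n|$ zeros, so the $k$ largest eigenvalues of $\tilde X$ (for $k\le n$) are exactly $\sigma_1(X),\dots,\sigma_k(X)$, summing to $\|X\|_k$. Let $\varphi_k\colon\gl(m+n,\C)\to\End(\E^k\C^{m+n})$ denote the Lie-algebra $k$-th exterior power, i.e.\ the derivative at the identity of the group representation $g\mapsto\E^k g$. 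This is a $\C$-linear map that sends hermitian matrices to hermitian operators (for the natural hermitian inner product on $\E^k\C^{m+n}$). Diagonalising an hermitian $H$ and evaluating $\varphi_k(H)$ on the induced wedge basis shows that the eigenvalues of $\varphi_k(H)$ are all $k$-fold sums $\lambda_{i_1}(H)+\cdots+\lambda_{i_k}(H)$ with $i_1<\cdots<i_k$, so in particular
$$\lambda_{\max}(\varphi_k(H))\>=\>\lambda_1(H)+\cdots+\lambda_k(H).$$

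Applied to $H=\tilde X$ this gives $\lambda_{\max}(\varphi_k(\tilde X))=\|X\|_k$, whence
$$B_k\>=\>\bigl\{X\in M_{m\times n}(\C)\colon\varphi_k(\tilde X)\preceq I\bigr\}.$$
Since $X\mapsto\varphi_k(\tilde X)$ is $\R$-linear with hermitian values, this is an LMI and $B_k$ is a spectrahedron. The only routine check is the eigenvalue formula for $\varphi_k(H)$; the main (mild) nuisance is bookkeeping in the quaternionic reduction, in particular the factor of two in the singular-value multiplicities.
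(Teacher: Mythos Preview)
Your proof is correct and follows the same route as the paper's: form the hermitian block $\begin{pmatrix}0&X\\X^*&0\end{pmatrix}$ with eigenvalues $\pm\sigma_i(X)$ (plus zeros), then apply the Lie-algebra $k$-th exterior power so that the top eigenvalue becomes $\|X\|_k$, yielding the LMI $\varphi_k(\tilde X)\preceq I$. Your quaternionic reduction (embed $M_{m\times n}(\bbH)\hookrightarrow M_{2m\times 2n}(\C)$, doubling singular values and passing to the $2k$-th Ky Fan ball of radius~$2$) differs only in bookkeeping from the paper's version, which instead embeds the square hermitian block into complex matrices; your accounting for the factor of two is in fact the more careful of the two.
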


\begin{proof}
Let $A\in M_N(\C)$ be a complex matrix with eigenvalues
$\theta_1,\dots,\theta_N$. The $k$-th exterior power $\E^kA$ of $A$
is a square matrix of size $\choose Nk$ that depends linearly on $A$,
and whose eigenvalues are the sums $\theta_{i_1}+\cdots+\theta_{i_k}$
with $1\le i_1<\cdots<i_k\le N$.

If $\K=\bbH$, we replace quaternions with complex $2\times2$
matrices, to avoid the problem of defining exterior powers of
quaternion matrices. Let $A\in M_N(\bbH)$ be hermitian ($A=A^*)$,
with (real) right eigenvalues $\theta_1,\dots,\theta_N$. Write
$A=A_1+jA_2$ with $A_1,\,A_2\in M_N(\C)$, and let
$$\tilde A\>=\>\begin{pmatrix}A_1&-\ol A_2\\A_2&\ol A_1
\end{pmatrix}$$
Then $\tilde A$ is a complex hermitian matrix of size $2N$ with
eigenvalues $\theta_1,\theta_1,\dots,\theta_N,\theta_N$. Let us, for
this purpose, define $\E^kA$ to be the complex matrix $\E^k\tilde A$
(of size $\choose{2N}k$).

Now let $\K$ be any of $\R,\,\C,\,\bbH$, let
$x\in M_{m\times n}(\K)$, and let $\wh x\in M_{m+n}(\K)$ be the
hermitian (block) matrix
$$\wh x\ =\ \begin{pmatrix}0&x\\x^*&0\end{pmatrix}$$
The (right) eigenvalues of $\wh x$ are $\pm\sigma_i(x)$,
$i=1,\dots,n$, together with $m-n$ additional zeros. It follows
that $x\in B_k$, i.e.\ $\Vert x\Vert_k\le1$, if and only if all
eigenvalues of $\E^k\wh x$ are $\le1$. In other words, this shows
that $B_k$ is described by the linear matrix inequality
$$B_k\>=\>\bigl\{x\in M_{m\times n}(\K)\colon\E^k\wh x\preceq I
\bigr\}.$$
\end{proof}

\begin{lab}\label{biorb1}%
Finally, we record the cases when the orbitope $\scrO_x$ is a
$K$-biorbitope. First consider the action \ref{hermact} on hermitian
matrices, for $\K=\R,\,\C,\,\bbH$. Up to scaling and translation
there is exactly one biorbitope $\scrO_x$ of this type, namely for
$x=\diag(1,0,\dots,0)$.
The orbit $Kx$ consists of all psd rank one matrices of (reduced)
trace~$1$. Its convex hull has the rank condition removed:
$\scrO_x=\{y\colon y\succeq0$, $\tr(y)=1\}$.
\end{lab}

\begin{lab}\label{biorb2}%
For the remaining actions \ref{expqnonexcept}, \ref{unitskewherm} and
\ref{unitcongr} there exist two essentially different biorbitopes.
When $\K=\R$, we have to exclude the case $m=n$ in
\ref{expqnonexcept} and the case $n$ even in \ref{unitskewherm}.
Indeed, these are the cases when the restricted root system is of
type~$D$ (see Theorem \ref{biorbitop}). Otherwise, the two
biorbitopes are:
\begin{itemize}
\item[(a)]
$x=(1,0,\dots,0)$ and $\scrO_x=\{y\colon\Vert y\Vert_*\le1\}$, the
unit ball in the nuclear norm;
\item[(b)]
$x=(1,\dots,1)$ and $\scrO_x=\{y\colon\Vert y\Vert_1\le1\}$, the unit
ball in the operator norm.
\end{itemize}
In case (b) of the action \ref{expqnonexcept}, the $K$-orbit $Kx$
is the Stiefel manifold $V_n(\K^m)$ of orthonormal $n$-frames in
$\K^m$, at least for $n<m$. We therefore call these orbitopes the
\emph{Stiefel orbitopes}. For $m=n$ and $\K\ne\R$, we get
tautological orbitopes: $\scrO_x$ is the convex hull of $SU(n)$
(case $\K=\C$), resp.\ of $Sp(n)$ (case $\K=\bbH$).
\end{lab}


\end{document}